\newtheorem{theorem}{Theorem}[section]
\newtheorem{definition}[theorem]{Definition}
\newtheorem{proposition}[theorem]{Proposition}
\newtheorem{lemma}[theorem]{Lemma}
\begin{document}

\title[Block-modified Wishart matrices]
{Block-modified Wishart matrices and free Poisson laws}

\author{Teodor Banica}
\address{T.B.: Department of Mathematics, Cergy-Pontoise University, 95000 Cergy-Pontoise, France. {\tt teodor.banica@u-cergy.fr}}

\author{Ion Nechita}
\address{I.N.: CNRS, Laboratoire de Physique Th\'eorique, IRSAMC, Universit\'e de Toulouse, UPS, 31062 Toulouse, France. {\tt nechita@irsamc.ups-tlse.fr}}

\subjclass[2000]{60B20}
\keywords{Wishart matrix, Poisson law}

\begin{abstract}
We study the random matrices of type $\tilde{W}=(id\otimes\varphi)W$, where $W$ is a complex Wishart matrix of parameters $(dn,dm)$, and $\varphi:M_n(\mathbb C)\to M_n(\mathbb C)$ is a self-adjoint linear map. We prove that, under suitable assumptions, we have the $d\to\infty$ eigenvalue distribution formula $\delta m\tilde{W}\sim\pi_{mn\rho}\boxtimes\nu$, where $\rho$ is the law of $\varphi$, viewed as a square matrix, $\pi$ is the free Poisson law, $\nu$ is the law of $D=\varphi(1)$, and $\delta=tr(D)$.
\end{abstract}

\maketitle

\section*{Introduction}

A complex Wishart matrix of parameters $(dn,dm)$ is a random matrix of type $W=\frac{1}{dm}GG^*$, where $G$ is a $dn\times dm$ matrix with independent complex $\mathcal N(0,1)$ entries. In the limit $d\to\infty$, the eigenvalue distribution of $W$ converges to a certain law $\pi_t$, with $t=m/n$, computed by Marchenko and Pastur in \cite{mpa}. Later on, the free probability theory of Voiculescu \cite{vo1}, \cite{vo2}, \cite{vdn} has led to a new, conceptual point of view on this result: the limiting law $\pi_t$ is in fact the ``free Poisson law'' of parameter $t$.

These fundamental results have been subject to a number of extensions and generalizations. In particular, three types of ``block-modified'' versions of the Marchenko-Pastur theorem appeared in the recent random matrix and free probability literature:
\begin{enumerate}
\item The matrices of type $(1\otimes E)W$, where $E$ is the diagonal matrix formed by the $n$-roots of unity, were investigated some time ago in \cite{bb+}. The limiting laws here are certain compound free Poisson laws, called free Bessel laws.

\item The matrices of type $(id\otimes tr(.)1)W$, where $tr$ is the normalized trace of the $n\times n$ matrices, appeared in connection with the quantum information theory problems investigated in \cite{cn2}. The limiting laws here are the free Poisson laws.

\item The matrices of type $(id\otimes t)W$, where $t$ is the transposition, were investigated by Aubrun in \cite{aub}. His computation, leading to shifted semicircles, was extended in \cite{bne}, where the limiting law was shown to be a difference of free Poisson laws.
\end{enumerate}

The main motivation for the above results comes from quantum information theory. In quantum information theory, the partial transposition map is known to be an ``entanglement witness'': it allows to test if a quantum state (represented by a positive, unit trace matrix) is entangled, in the following sense. If a bipartite quantum state $\rho \in M_d(\mathbb C)\otimes M_n(\mathbb C)$ is separable (i.e. it can be written as a convex combination of product states $\rho^{(1)}_i \otimes \rho^{(2)}_i$), then its partial transposition $\rho^\Gamma=(id\otimes t)\rho$ is also a quantum state. However, if $\rho$ is entangled, then $\rho^\Gamma$ may fail to be positive. In the case where $\rho^\Gamma$ is a positive matrix, the quantum state $\rho$ is said to be PPT (Positive Partial Transpose). Hence, separable states are always PPT and non-PPT states are necessarily entangled. The equivalence of entanglement and non-PPT is known to hold only for total dimension smaller than 6 ($2\times 2$ or $2 \times 3$ product systems) and it fails for larger dimensions, in the sense that there exist PPT entangled states. In the same spirit as in \cite{aub}, the results in \cite{bne} regarding the positivity of the support of the limit measure can be interpreted as results about typicality of PPT states for large quantum systems. The Wishart matrices (normalized to have unit trace) are known to be physically reasonable models for random quantum states on a tensor product $\mathbb C^d \otimes \mathbb C^n$, the parameter $m$ of the Wishart distribution being related to the size of some environment $\mathbb C^{dm}$ needed to define the state. So, as a conclusion, the various technical results in \cite{aub} and \cite{bne}, not to be detailed here, indicate that when $m>2$ and $n<m/4+1/m$, a typical state in $\mathbb C^d \otimes \mathbb C^n$ is PPT.

The starting point of the present work was the following observation: the matrices $W$, $(1\otimes E)W$, $(id\otimes tr(.)1)W$ and $(id\otimes t)W$ appearing in the above considerations are all particular cases of matrices of the form $\tilde{W}=(id\otimes\varphi)W$, where $\varphi:M_n(\mathbb C)\to M_n(\mathbb C)$ is a linear map. So, a natural problem is to try to compute the asymptotic eigenvalue distribution, with $d\to\infty$, of such general ``block-modified'' Wishart matrices $\tilde{W}$.

In this paper we solve this problem, under suitable assumptions on $\varphi$. Let us first recall that the linear maps of type $\varphi:M_n(\mathbb C)\to M_n(\mathbb C)$ are in correspondence with the matrices $\Lambda\in M_n(\mathbb C)\otimes M_n(\mathbb C)$. We will use the following formula for this correspondence:
$$\varphi(A)=(Tr\otimes id)[(t\otimes id)\Lambda\cdot (A\otimes 1)]$$

Here $Tr$ and $t$ denote the usual trace and transposition of the $n\times n$ matrices. This correspondence, when restricted to the subclasses of completely positive maps, is known in quantum information theory as the Choi-Jamiolkowski isomorphism. See \cite{bzy}.

Given a real positive measure $\mu$, not necessarily of mass 1, we denote by $\pi_\mu$ the corresponding compound free Poisson law. Also, we write $\tilde{W}\sim\mu$ if the $d\to\infty$ asymptotic eigenvalue distribution of $\tilde{W}$ is $\mu$.  With these notations, our main result is as follows:

\medskip

\noindent {\bf Theorem.} {\em Let $\tilde{W}=(id\otimes\varphi)W$, where $W$ is a complex Wishart matrix of parameters $(dn,dm)$, and where $\varphi:M_n(\mathbb C)\to M_n(\mathbb C)$ is a self-adjoint linear map, coming from a matrix $\Lambda\in M_n(\mathbb C)\otimes M_n(\mathbb C)$. Then, under suitable ``planar'' assumptions on $\varphi$, we have $\delta m\tilde{W}\sim\pi_{mn\rho}\boxtimes\nu$, with $\rho=law(\Lambda)$, $\nu=law(D)$, $\delta=tr(D)$, where $D=\varphi(1)$.}

\medskip

This result generalizes the above-mentioned computations in \cite{aub}, \cite{bne}, \cite{cn2}, \cite{mpa}. The proof is quite standard, first by applying the Wick formula, then by letting $d\to\infty$ and by using Biane's bijection in \cite{bia}, and finally by using Speicher's free cumulants \cite{sp1}.

We should mention that the combinatorics of some very general ``modified Wishart'' matrices was already investigated some time ago by Graczyk, Letac and Massam \cite{glm}. Our assumption that the modification is performed blockwise is of course a key one: this makes the whole combinatorics controllable, and leads to the above result.

Let us also mention that, as kindly pointed out to us by the referee, there should be as well a direct proof for the above result. Indeed, let $e_{ij}$ be the matrix units of the inclusion $M_n(\mathbb C)\subset M_{nd}(\mathbb C)$. It is known from Voiculescu's work \cite{vo3} that $e_{ij}$ is asymptotically from $W$. Now the matrix $\tilde{W}$ can be viewed as a linear combination of matrices of type $e_{ij}We_{kl}$, so, in principle, combinatorial computations of free probabilistic flavour, in the lines of the book by Nica-Speicher \cite{nsp} for example, should probably do as well the job. Several proofs of this type, for related results, have been worked out, e.g. in \cite{agt}, \cite{bry}, \cite{len}, \cite{oso}. 

We assume in the above statement that $\varphi$ is self-adjoint, in the sense that it maps self-adjoints to self-adjoints. It would be interesting to understand as well the non-self-adjoint case, as to cover the ``free Bessel'' computation in \cite{bb+}, where $\varphi(A)=EA$, with $E$ being the diagonal matrix formed by the $n$-roots of unity. We should mention that our formula $\delta m\tilde{W}\sim\pi_{mn\rho}\boxtimes\nu$ covers as well this situation, but only in the formal sense of \cite{bb+}.

Some other problems concern a possible relation with the complex reflection groups. Indeed, the exact planarity assumptions needed on $\varphi$, not to be detailed here, concern certain sets of partitions $P_{even}(2k,2k)$, for which we refer to Definition 4.4 below, and these sets are known to span the centralizer spaces for the hyperoctahedral group \cite{bbc}.

The above questions are probably all related. A common answer to them should probably come from a very general random matrix/free probability formula, extending our present formula $\delta m\tilde{W}\sim\pi_{mn\rho}\boxtimes\nu$. But we have no further results here.

The paper is organized as follows: in 1-2 we perform a joint study of the compound free Poisson laws and of the block-modified Wishart matrices, and in 3-4 we develop a number of supplementary ingredients, and we state and prove our main result.

\subsection*{Acknowledgements}

We would like to thank Mireille Capitaine, Beno\^it Collins, Maxime F\'evrier and Camille Male for several useful discussions, and the anonymous referee of this paper for a number of useful comments and suggestions. The work of T.B. was supported by the ANR grant ``Granma''. I.N. acknowledges financial support from the ANR project OSvsQPI 2011 BS01 008 01 and from a CNRS PEPS grant.
\section{Poisson laws}

Our starting point is the Marchenko-Pastur theorem \cite{mpa}. Let us recall that a complex Wishart matrix of parameters $(N,M)$ is a random $N\times N$ matrix of type $W=\frac{1}{M}GG^*$, where $G$ is a $N\times M$ matrix with independent complex Gaussian $\mathcal N(0,1)$ entries. 

\begin{theorem}
In the limit $N,M\to\infty$, $M/N\to t\geq 0$, the law of $tW$ converges to
$$\pi_t=\max (1-t,0)\delta_0+\frac{\sqrt{4t-(x-1-t)^2}}{2\pi x} 1_{[(\sqrt{t}-1)^2, (\sqrt{t}+1)^2]}(x)\,dx$$
which is called Marchenko-Pastur law of parameter $t\geq 0$.
\end{theorem}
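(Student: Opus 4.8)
The plan is to establish the convergence by the method of moments, computing the limiting moments of $tW$ via the Wick formula and then identifying them, through Speicher's free cumulants \cite{sp1}, \cite{nsp}, with the moments of the stated density. First I would record the convenient normalization: since $t=M/N$ we have $tW=\frac1N GG^*$, so that the $p$-th normalized moment to be computed is $\mathbb E\left[\frac1N\mathrm{Tr}\left((tW)^p\right)\right]=\frac{1}{N^{p+1}}\sum\mathbb E\left[G_{i_1a_1}\bar G_{i_2a_1}G_{i_2a_2}\bar G_{i_3a_2}\cdots\right]$, the sum running over cyclically arranged row indices $i_1,\dots,i_p$ and column indices $a_1,\dots,a_p$.

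Next I would apply the Wick (Isserlis) formula to each Gaussian expectation, writing it as a sum over pairings of the $p$ factors $G$ with the $p$ factors $\bar G$. Using the covariance $\mathbb E\left[G_{ia}\bar G_{jb}\right]=\delta_{ij}\delta_{ab}$, each pairing forces a matching of row and column indices, and the number of indices left free to sum determines the power of $N$ and of $M$ produced. The key combinatorial step is the genus expansion: encoding a pairing by a permutation and passing to the associated surface, one sees that the exponent of $N$, after substituting $M\sim tN$, is maximized precisely when the pairing is non-crossing, i.e. planar of genus zero, every other pairing being suppressed by a factor $O(N^{-2})$. This planarity reduction is the step I expect to be the main obstacle, since it requires careful bookkeeping of the number of index loops against the Euler characteristic of the surface.

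Having isolated the planar contributions, I would count them: each non-crossing pairing of the $2p$ legs corresponds to a non-crossing partition $\sigma\in NC(p)$ of the row indices, and the free column sum contributes one factor of $t$ per block, so that $\lim_{N\to\infty}\mathbb E\left[\frac1N\mathrm{Tr}\left((tW)^p\right)\right]=\sum_{\sigma\in NC(p)}t^{|\sigma|}$, with $|\sigma|$ the number of blocks. By the moment-cumulant formula \cite{sp1}, this is exactly the $p$-th moment of the compactly supported law whose free cumulants are all equal to $t$, namely the free Poisson law $\pi_t$.

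Finally, to match the explicit density I would pass to the $R$-transform $R_{\pi_t}(z)=\frac{t}{1-z}$, deduce the Cauchy transform $G$ from the relation $G\left(R(z)+1/z\right)=z$, solve the resulting quadratic equation, and recover the density by the Stieltjes inversion formula: the atom $\max(1-t,0)\delta_0$ appears as the residue of $G$ at $0$ when $t<1$, while the absolutely continuous part gives the stated square-root profile on $[(\sqrt t-1)^2,(\sqrt t+1)^2]$. Since $\pi_t$ has compact support its moments determine it uniquely (Carleman's condition), so the convergence of moments upgrades to convergence in distribution, which completes the proof.
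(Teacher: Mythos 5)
Your proposal is correct and follows essentially the same route as the paper: the paper's proof of this theorem (delegated to Theorem 2.4 together with Lemma 2.1 and the $R$-transform computations of Section 1) likewise proceeds by the Wick formula, reduces to noncrossing partitions via the inequality $|\pi|+|\pi\gamma^{-1}|\leq p+1$ with equality iff $\pi\in NC(p)$, and identifies the limit through free cumulants and Stieltjes inversion. The only cosmetic difference is that you phrase the planarity reduction as a genus expansion, while the paper derives the same fact from the triangle inequality for the Cayley distance on $S_p$ together with Biane's characterization of the equality case.
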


\begin{proof}
This follows for instance by checking, by using the Wick formula, that the asymptotic moments of $W$ coincide with those of $\pi_t$. We refer here the reader to section 2 below, where we present a generalization of this theorem, along with a complete proof.
\end{proof}

In order to deal with the block-modified case, we will need a free probability point of view on $\pi_t$. The idea is that $\pi_t$ appears naturally via a  ``free Poisson limit'' procedure.

We recall from \cite{vdn} that a noncommutative probability space is a pair $(A,\varphi)$, where $A$ is a unital $C^*$-algebra, and $\varphi:A\to\mathbb C$ is a positive unital trace. The law of a self-adjoint element $a\in A$ is the probability measure on the spectrum of $a$ (which is a compact subset of $\mathbb R$) given by $\int f(x)d\mu(x)=\varphi(f(a))$, for any continuous function $f:\mathbb R\to\mathbb C$.

Two subalgebras $B,C\subset A$ are called free if $\varphi(\ldots b_ic_ib_{i+1}c_{i+1}\ldots)=0$ whenever $b_i\in B$ and $c_i\in C$ satisfy $\varphi(b_i)=\varphi(c_i)=0$. Two elements $b,c\in A$  are called free whenever the algebras $B = \langle b \rangle $ and $C = \langle c \rangle$ that they generate are free. Finally, the free convolution operation $\boxplus$ is defined as follows: if $\mu,\nu$ are compactly supported probability measures on $\mathbb R$, then $\mu\boxplus\nu$ is the law of $b+c$, where $b,c$ are free, having laws $\mu,\nu$. See \cite{vdn}.

With these definitions, we have two conceptual results about $\pi_t$, as follows.

\begin{proposition}
We have $\pi_{s+t}=\pi_s\boxplus\pi_t$, for any $s,t\geq 0$, so that the Marchenko-Pastur laws form a semigroup with respect to Voiculescu's free convolution operation.
\end{proposition}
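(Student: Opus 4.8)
The plan is to use Voiculescu's R-transform, which linearizes free convolution. Recall that for a compactly supported probability measure $\mu$ on $\mathbb R$ one forms the Cauchy transform $G_\mu(z)=\int(z-x)^{-1}\,d\mu(x)$, inverts it functionally to obtain $K_\mu=G_\mu^{-1}$, and sets $R_\mu(z)=K_\mu(z)-1/z$. The fundamental property is that $R_{\mu\boxplus\nu}=R_\mu+R_\nu$. Since the R-transform determines the measure, it suffices to show that $R_{\pi_t}$ is additive in the parameter, i.e.\ that $R_{\pi_s}+R_{\pi_t}=R_{\pi_{s+t}}$.

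First I would compute $R_{\pi_t}$ explicitly. Starting from the density in Theorem 1.1, a direct contour/residue computation shows that $G=G_{\pi_t}$ satisfies the quadratic equation $zG^2-(z+1-t)G+1=0$, whence, selecting the branch with $G(z)\sim 1/z$ at infinity, $G_{\pi_t}(z)=\frac{z+1-t-\sqrt{(z-1-t)^2-4t}}{2z}$. Inverting this relation (solving $z=1/G+t/(1-G)$ for $z$ in terms of $w=G$) gives $K_{\pi_t}(w)=\frac{1}{w}+\frac{t}{1-w}$, so that $R_{\pi_t}(w)=\frac{t}{1-w}$.

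The conclusion is then immediate: one has $R_{\pi_s}(w)+R_{\pi_t}(w)=\frac{s}{1-w}+\frac{t}{1-w}=\frac{s+t}{1-w}=R_{\pi_{s+t}}(w)$, and by the linearization property together with the injectivity of the R-transform we obtain $\pi_s\boxplus\pi_t=\pi_{s+t}$. As $s,t\geq 0$ are arbitrary, the laws $\pi_t$ form a semigroup under $\boxplus$, as claimed.

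I expect the only nontrivial step to be the explicit computation and inversion of the Cauchy transform: establishing $R_{\pi_t}(w)=t/(1-w)$ is the genuine analytic work, whereas once this is in hand the additivity in $t$, and hence the semigroup law, are purely formal. An equivalent route bypasses the inversion: the free cumulants of $\pi_t$ are all equal to $t$, and since free cumulants are additive under $\boxplus$, one gets $\kappa_n(\pi_s\boxplus\pi_t)=s+t=\kappa_n(\pi_{s+t})$ for every $n$, giving the same conclusion. Alternatively, one can realize $\pi_t$ as the free Poisson limit $\lim_N[(1-t/N)\delta_0+(t/N)\delta_1]^{\boxplus N}$ and deduce the semigroup property from the additivity of the limiting parameter, though this requires some care in combining the two limits.
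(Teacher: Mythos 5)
Your proof is correct and follows essentially the same route as the paper: both compute the Cauchy transform of $\pi_t$, invert to obtain $R_{\pi_t}(y)=t/(1-y)$, and conclude from linearity in $t$ via the linearization property of the $R$-transform. (Your alternative remarks via free cumulants are just a repackaging of the same computation, since the cumulants are the coefficients of $R$.)
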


\begin{proof}
We recall from \cite{vo1} that the operation $\boxplus$ for real probability measures is linearized by the $R$-transform, constructed as follows: first, we let $f(y)=1+m_1y+m_2y^2+\ldots$ be the moment generating function of our measure, so that $G(\xi)=\xi^{-1}f(\xi^{-1})$ is the Cauchy transform; then we set $R(y)=K(y)-y^{-1}$, where $K(y)$ is such that $G(K(y))=y$.

By Stieltjes inversion, the Cauchy transform of $\pi_t$ is given by:
$$G(\xi)=\frac{(\xi+1-t)+\sqrt{(\xi+1-t)^2-4\xi}}{2\xi}$$

Thus we can compute the $R$-transform, by proceeding as follows:
\begin{eqnarray*}
\xi G^2+1=(\xi+1-t)G
&\implies&Ky^2+1=(K+1-t)y\\
&\implies&Ry^2+y+1=(R+1-t)y+1\\
&\implies&R=t/(1-y)
\end{eqnarray*}

Now since the expression $t/(1-y)$ is linear in $t$, this gives the result.
\end{proof}

\begin{theorem}
We have the free Poisson limit formula
$$\pi_t=\lim_{n\to\infty}\left(\left(1-\frac{t}{n}\right)\delta_0+\frac{t}{n}\delta_1\right)^{\boxplus n}$$
so that $\pi_t$ is called as well ``free Poisson law'' of parameter $t\geq 0$.
\end{theorem}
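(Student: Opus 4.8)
The plan is to use the $R$-transform, which by the proof of Proposition 1.2 linearizes Voiculescu's free convolution $\boxplus$. Writing $\mu_n=(1-t/n)\delta_0+(t/n)\delta_1$, additivity of the $R$-transform gives $R_{\mu_n^{\boxplus n}}=n\,R_{\mu_n}$, so the whole problem reduces to computing $R_{\mu_n}$ and checking that $n\,R_{\mu_n}(y)$ converges, as $n\to\infty$, to the $R$-transform $R(y)=t/(1-y)$ of $\pi_t$ that was already found in Proposition 1.2.

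First I would write down the Cauchy transform of $\mu_n$, namely $G_n(\xi)=(1-t/n)/\xi+(t/n)/(\xi-1)$, and invert it. Setting $y=G_n(\xi)$ and clearing denominators produces the quadratic $y\xi^2-(y+1)\xi+(1-t/n)=0$, whose relevant root (the one behaving like $1/y$ as $y\to 0$) is
\[
K_n(y)=\frac{(y+1)+\sqrt{(1-y)^2+4yt/n}}{2y}.
\]
Subtracting $y^{-1}$ then yields the explicit formula
\[
R_{\mu_n}(y)=\frac{(y-1)+\sqrt{(1-y)^2+4yt/n}}{2y}.
\]

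It then remains to multiply by $n$ and let $n\to\infty$. Expanding the square root as $\sqrt{(1-y)^2+4yt/n}=(1-y)+2yt/(n(1-y))+O(n^{-2})$, valid for $y$ in a neighbourhood of $0$ where $1-y>0$, one finds $n\,R_{\mu_n}(y)\to t/(1-y)$, which is exactly $R_{\pi_t}(y)$. Since the $R$-transform determines the law, and convergence of $R$-transforms on a fixed neighbourhood of the origin entails weak convergence of the measures (the free analogue of the Lévy continuity theorem), this gives $\mu_n^{\boxplus n}\to\pi_t$.

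The only genuinely delicate point is this last continuity/uniqueness step: one must ensure that the convergence $n\,R_{\mu_n}\to t/(1-y)$ is uniform on some fixed complex neighbourhood of $0$, so that the associated Cauchy transforms, and hence the underlying measures, converge. Everything else is a routine quadratic inversion followed by a Taylor expansion. Alternatively, the same computation can be phrased at the level of free cumulants: since they are additive under $\boxplus$, one checks that the free cumulants of $\mu_n^{\boxplus n}$ all converge to $t$, which are precisely the free cumulants of $\pi_t$, reading off $R(y)=t/(1-y)=\sum_{k\geq 0}t\,y^k$.
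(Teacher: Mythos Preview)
Your proof is correct and follows essentially the same route as the paper. The paper does not prove Theorem 1.3 directly but defers to Theorem 1.5, whose proof is precisely an $R$-transform computation of the same kind: one writes the Cauchy transform of the pre-limit measure, substitutes $K(y)=y^{-1}+R/n$, and lets $n\to\infty$ to recover $R(y)=t/(1-y)$; your version simply solves the resulting quadratic explicitly rather than manipulating the implicit equation.
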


\begin{proof}
This result is once again well-known, see e.g. Speicher \cite{sp2} or Hiai and Petz \cite{hpe}, and is for instance a particular case of Theorem 1.5 below.
\end{proof}

In what follows we will need some generalizations of the above results. The class of ``compound free Poisson laws'' was introduced by Speicher in \cite{sp2}, studied by Hiai and Petz in \cite{hpe}, then further studied in \cite{bb+}, \cite{bsk}. Since we will be only interested in the ``discrete case'', it is technically convenient to introduce these distributions as follows.

\begin{definition}
For $\mu=\sum_{i=1}^sc_i\delta_{z_i}$ with $c_i>0$ and $z_i\in\mathbb C$, we let
$$\pi_\mu={\rm law}\left(\sum_{i=1}^sz_i\alpha_i\right)$$
where the variables $\alpha_i$ are free Poisson of parameter $c_i$, free.
\end{definition}

Observe that we don't necessarily assume $\mu$ to be of mass $1$. Observe also that we don't assume the numbers $z_i$ to be distinct: the fact that we can indeed do so comes from Proposition 1.2, which shows that the distribution $\pi_\mu$ is indeed well-defined.

\begin{theorem}
If $\mu$ is real we have the Poisson limit formula
$$\pi_\mu=\lim_{n\to\infty}\left(\left(1-\frac{c}{n}\right)\delta_0+\frac{1}{n}\mu\right)^{\boxplus n}$$
where $c=mass(\mu)$, so that $\pi_\mu$ is called ``compound free Poisson law'' associated to $\mu$.
\end{theorem}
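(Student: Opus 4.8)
The plan is to compute the free cumulants of both sides and check that they agree. I will use, from the proof of Proposition 1.2, that free convolution is linearized by the $R$-transform, $R_{\mu\boxplus\nu}=R_\mu+R_\nu$; writing $R_\mu(y)=\sum_{k\geq1}\kappa_k(\mu)y^{k-1}$, this means the free cumulants $\kappa_k$ of Speicher \cite{sp1} are additive under $\boxplus$, so that $\kappa_k(\nu^{\boxplus n})=n\,\kappa_k(\nu)$. Since each $\kappa_k$ is a fixed polynomial in the moments $m_1,\dots,m_k$ via the moment-cumulant formula over noncrossing partitions, and since a compactly supported measure is determined by its moments, it will be enough to show that the cumulants of the right-hand limit converge to those of $\pi_\mu$.

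For the right-hand side I would set $\nu_n=(1-\tfrac{c}{n})\delta_0+\tfrac1n\mu$, which is a probability measure once $n>c$. Since $0^k=0$ for $k\geq1$, its moments are $m_k(\nu_n)=\tfrac1n m_k(\mu)=\tfrac1n\sum_{i=1}^s c_iz_i^k$ for every $k\geq1$, each of size $O(1/n)$. Feeding this into the moment-cumulant inversion, the single-block partition contributes exactly $m_k(\nu_n)$, while every other noncrossing partition contributes a product of at least two moments and is therefore $O(1/n^2)$; hence $\kappa_k(\nu_n)=\tfrac1n m_k(\mu)+O(1/n^2)$. Multiplying by $n$ then gives $\kappa_k(\nu_n^{\boxplus n})=n\,\kappa_k(\nu_n)\to m_k(\mu)=\sum_{i=1}^s c_iz_i^k$ for each $k\geq1$.

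It then remains to compute the cumulants of $\pi_\mu={\rm law}(\sum_i z_i\alpha_i)$. The computation in Proposition 1.2 gives $R_{\pi_t}(y)=t/(1-y)=\sum_{k\geq1}t\,y^{k-1}$, so a free Poisson element $\alpha_i$ of parameter $c_i$ has all free cumulants equal to $c_i$. By homogeneity, $\kappa_k(z_i\alpha_i)=z_i^k\kappa_k(\alpha_i)=c_iz_i^k$, and since the $\alpha_i$ are free their mixed cumulants vanish and the $\kappa_k$ simply add, yielding $\kappa_k(\pi_\mu)=\sum_{i=1}^s c_iz_i^k=m_k(\mu)$. This agrees with the limit found above, so by the method of moments, the limit $\pi_\mu$ being compactly supported and hence moment-determinate, we conclude $\nu_n^{\boxplus n}\to\pi_\mu$. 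The step I expect to require the most care is the moment-cumulant inversion: one must confirm that, with all moments of $\nu_n$ of order $1/n$, each noncrossing contribution other than the single-block term really is $O(1/n^2)$, so that the rescaled cumulants converge to the moments of $\mu$ rather than to something larger.
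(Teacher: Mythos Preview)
Your argument is correct. Both you and the paper match the $R$-transforms (equivalently, the free cumulants) of the two sides; the computation for $\pi_\mu=\mathrm{law}(\sum_i z_i\alpha_i)$ is identical in substance, just phrased coefficientwise rather than as a generating-function identity. The one genuine tactical difference is on the Poisson-limit side: the paper writes down the Cauchy transform $G_{\rho_n}$ explicitly, substitutes $\xi=K(y)=y^{-1}+R/n$ into the functional equation $G(K(y))=y$, and lets $n\to\infty$ to obtain the closed form $R(y)=\sum_i c_iz_i/(1-yz_i)$; you instead observe $m_k(\nu_n)=m_k(\mu)/n$, invoke M\"obius inversion over $NC(k)$ to get $\kappa_k(\nu_n)=m_k(\mu)/n+O(1/n^2)$, and rescale. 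Your route is slightly more elementary (no functional inversion) and makes transparent the fact, recorded separately in the paper as Theorem~1.6, that the free cumulants of $\pi_\mu$ are exactly the moments of $\mu$; the paper's route yields the full $R$-transform in closed form in one stroke. Your caution about the inversion step is well placed but easily discharged: for each fixed $k$ there are only finitely many $\pi\in NC(k)$, the M\"obius coefficients are constants depending only on $k$, and every $\pi\neq 1_k$ contributes a product of at least two moments of $\nu_n$, each $O(1/n)$, so the error is indeed $O(1/n^2)$.
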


\begin{proof}
This result is from \cite{bsk}, we present below the idea of the proof. Let $\rho_n$ be the measure appearing in the statement, under the convolution sign. We have:
$$G_{\rho_n}(\xi)=\left(1-\frac{c}{n}\right)\frac{1}{\xi}+\frac{1}{n}\sum_{i=1}^s\frac{c_i}{\xi-z_i}$$

Now since $K_{\rho_n}(y)=y^{-1}+R_{\rho_n}(y)=y^{-1}+R/n$, where $R=R_{\rho_n^{\boxplus n}}(y)$, we get:
$$y=\left(1-\frac{c}{n}\right)\frac{1}{y^{-1}+R/n}+\frac{1}{n}\sum_{i=1}^s\frac{c_i}{y^{-1}+R/n-z_i}$$

Now multiplying by $n/y$, rearranging the terms, and letting $n\to\infty$, we get:
$$\frac{c+yR}{1+yR/n}=\sum_{i=1}^s\frac{c_i}{1+yR/n-yz_i}
\implies R_{\pi_\mu}(y)=\sum_{i=1}^s\frac{c_iz_i}{1-yz_i}$$

On the other hand, let $\alpha$ be the sum of free Poisson variables in the statement. By using the $R$-transform formula in the proof of Proposition 1.2, we have:
$$R_{\alpha_i}(y)=\frac{c_i}{1-y}
\implies R_{z_i\alpha_i}(y)=\frac{c_iz_i}{1-yz_i}
\implies R_\alpha(y)=\sum_{i=1}^s\frac{c_iz_i}{1-yz_i}$$

Thus we have indeed the same formula as above, and we are done.
\end{proof}

Finally, we will need the notion of free cumulant, introduced by Speicher in \cite{sp1}. The free cumulants $\kappa_p(a)$ of a self-adjoint variable $a$ are the coefficients of its $R$-transform:
$$R_a(y)=\sum_{p=0}^\infty\kappa_{p+1}(a)y^p$$

With $\kappa_\pi(a)=\prod_{b\in\pi}\kappa_{\# b}(a)$, where the product is over all blocks of $\pi$, and $\#$ is the size of blocks, we have then the following moment-cumulant formula, due to Speicher \cite{sp1}:
$$\varphi(a^p)=\sum_{\pi\in NC(p)}\kappa_\pi(a)$$

For concrete applications, it is rather the converse statement that we will use:  whenever we have a sequence of numbers $\kappa_p(a)$ such the above moment formula holds for any $k\in\mathbb N$, it follows that these numbers $\kappa_p(a)$ are the free cumulants of $a$, see \cite{sp1}.

The following will be our main tool for detecting compound free Poisson laws.

\begin{theorem}
If $\mu$ is real then the free cumulants of $\pi_\mu$ are the moments of $\mu$.
\end{theorem}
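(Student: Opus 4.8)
The plan is to read the free cumulants off directly from the $R$-transform of $\pi_\mu$, which has already been computed in the proof of Theorem 1.5. There we obtained
$$R_{\pi_\mu}(y)=\sum_{i=1}^s\frac{c_iz_i}{1-yz_i},$$
so the whole statement should reduce to a power series expansion of this rational expression. Since we are assuming $\mu$ to be real, the points $z_i$ are real and $\pi_\mu$ is the genuine law of a self-adjoint variable; hence its free cumulants $\kappa_p(\pi_\mu)$ are well-defined via the defining expansion $R_{\pi_\mu}(y)=\sum_{p\geq 0}\kappa_{p+1}(\pi_\mu)y^p$, and the computation below is not merely formal.

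The key step is to expand each summand as a geometric series: for $|yz_i|<1$ one has $\frac{c_iz_i}{1-yz_i}=\sum_{p\geq 0}c_iz_i^{p+1}y^p$, and summing over $i$ gives
$$R_{\pi_\mu}(y)=\sum_{p=0}^\infty\left(\sum_{i=1}^sc_iz_i^{p+1}\right)y^p.$$
Matching the coefficient of $y^p$ against the defining expansion $R_{\pi_\mu}(y)=\sum_{p\geq 0}\kappa_{p+1}(\pi_\mu)y^p$ then yields $\kappa_{p+1}(\pi_\mu)=\sum_{i=1}^sc_iz_i^{p+1}$ for every $p\geq 0$. Writing $k=p+1$, the right-hand side is exactly $\int x^k\,d\mu(x)$, the $k$-th moment of $\mu=\sum_ic_i\delta_{z_i}$, so $\kappa_k(\pi_\mu)$ equals the $k$-th moment of $\mu$ for all $k\geq 1$, which is the assertion.

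There is no serious obstacle here: the entire content sits in the $R$-transform formula of Theorem 1.5, and once that is in hand the theorem is a one-line geometric-series expansion followed by coefficient comparison. The only point deserving a word of care is the justification that we are handling genuine free cumulants rather than a purely formal object, and this is precisely what the hypothesis that $\mu$ is real supplies.
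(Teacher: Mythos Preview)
Your proof is correct and follows essentially the same approach as the paper: quote the $R$-transform formula $R_{\pi_\mu}(y)=\sum_i c_iz_i/(1-yz_i)$ from the proof of Theorem~1.5, expand each term as a geometric series, and read off the coefficients as the moments of $\mu$. The only addition you make is the explicit remark about why the hypothesis that $\mu$ is real ensures the cumulants are genuine rather than formal, which the paper leaves implicit.
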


\begin{proof}
We write $\mu=\sum_{i=1}^sc_i\delta_{z_i}$ with $c_i>0$ and $z_i\in\mathbb R$. We know from the proof of Theorem 1.5 that the $R$-transform of $\pi_\mu$ is:
$$R(y)=\sum_{i=1}^s\frac{c_iz_i}{1-yz_i}=\sum_{i=1}^s\sum_{p=0}^\infty c_iz_i^{p+1}y^p=\sum_{p=0}^\infty\left(\sum_{i=1}^sc_iz_i^{p+1}\right)y^p$$

Now since the free cumulants are the coefficients of $R$, this gives the result.
\end{proof}

\section{Wishart matrices}

Consider the embedding $NC(p)\subset S_p$ obtained by ``cycling inside each block''. That is, each block $b=\{b_1,\ldots,b_k\}$ with $b_1<\ldots<b_k$ of a given noncrossing partition $\pi\in NC(p)$ produces by definition the cycle $(b_1\ldots b_k)$ of the corresponding permutation $\pi\in S_p$. 

Observe that the number of blocks of $\pi\in NC(p)$ corresponds in this way to the number of cycles of the corresponding permutation $\pi\in S_p$. This number will be denoted $|\pi|$.

For $\pi\in NC(p)$ we denote by $||\pi||$ the number of blocks of $\pi$ having even size. We will need a number of facts on partitions, summarized in the following statement:

\begin{lemma}
If $\gamma\in NC(p)$ is the one-block partition, then $|\pi|+|\pi\gamma^{-1}|\leq p+1$, with equality iff $\pi\in NC(p)$. In addition, for $\pi\in NC(p)$ we have:
\begin{enumerate}
\item $|\gamma^{-1}\pi|=|\pi\gamma^{-1}|=p+1-|\pi|$.

\item $|1|+|\pi^l\gamma|=|\pi\gamma^{-1}|+|\pi^{l+1}|$, for any $l\in\mathbb Z$.

\item $|\gamma\pi|=|\pi\gamma|=||\pi||+1$.
\end{enumerate}
\end{lemma}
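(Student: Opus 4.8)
The plan is to run everything through the reflection length $\ell(\sigma)=p-|\sigma|$ on $S_p$, which is the word length for the generating set of all transpositions, and which satisfies $\ell(\sigma)=\ell(\sigma^{-1})$, the triangle inequality $\ell(\sigma\tau)\le\ell(\sigma)+\ell(\tau)$, and $\ell(\gamma)=p-1$. For the main inequality I would write $\gamma=(\pi\gamma^{-1})^{-1}\pi$ and apply subadditivity, getting $p-1=\ell(\gamma)\le\ell(\pi\gamma^{-1})+\ell(\pi)$, which rearranges to $|\pi|+|\pi\gamma^{-1}|\le p+1$. Equality forces $\pi$ to lie on a geodesic from the identity to $\gamma$ in the transposition Cayley graph, and by Biane's description of such geodesics \cite{bia} these are exactly the permutations coming from noncrossing partitions via the increasing-cycle embedding; this yields the ``equality iff $\pi\in NC(p)$'' clause.

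For part (1), conjugation invariance of the cycle count, i.e. $|ab|=|ba|$, gives $|\gamma^{-1}\pi|=|\pi\gamma^{-1}|$ at once (the two permutations are conjugate by $\gamma$), and the common value is $p+1-|\pi|$ by the equality case just established, which applies since $\pi\in NC(p)$. Part (3) I would then reduce to part (2): the equality $|\gamma\pi|=|\pi\gamma|$ is again conjugation invariance, while specializing part (2) at $l=1$ and using part (1) gives $|\pi\gamma|=|\pi^2|+1-|\pi|$; since squaring an increasing cycle of length $k$ produces $\gcd(2,k)$ cycles, one has the elementary count $|\pi^2|=\sum_b\gcd(2,\#b)=|\pi|+||\pi||$, and substituting yields $|\pi\gamma|=||\pi||+1$.

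This isolates part (2) as the real content. Rewriting it with $|1|=p$ and part (1), it is equivalent to $|\pi^l\gamma|=|\pi^{l+1}|+1-|\pi|$ for all $l$. I would recognize the right-hand side as the subadditivity bound for the factorization $\pi^l\gamma=\pi^{l+1}\cdot(\pi^{-1}\gamma)$: indeed $\ell(\pi^{-1}\gamma)=p-|\pi^{-1}\gamma|=|\pi|-1$ by part (1) and invariance of $|\cdot|$ under inversion and conjugation, so subadditivity gives $|\pi^l\gamma|\ge|\pi^{l+1}|+1-|\pi|$ automatically, and part (2) is precisely the assertion that this inequality is an equality.

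The hard part is therefore establishing this equality case, namely that no cancellation occurs when a reduced factorization of $\pi^{l+1}$ is concatenated with one of the complement $\pi^{-1}\gamma$. The point is that $\pi^{l+1}$ acts within the blocks of $\pi$ (a power only subdivides each increasing cycle), so it admits a reduced factorization using transpositions supported inside the blocks, whereas $\pi^{-1}\gamma$ is a Kreweras-type complement of $\pi$ relative to $\gamma$ and can be factored compatibly; the noncrossing (planar, forest-like) structure of $\pi\le\gamma$ guarantees that these two families of transpositions never create a cycle, so the concatenation stays reduced. Making this ``no cancellation'' step precise --- e.g. through the forest/planar-map picture underlying Biane's bijection \cite{bia}, or equivalently by checking that the associated permutation factorization has genus zero --- is where I expect the most care to be needed; the verifications in parts (1) and (3) are then immediate.
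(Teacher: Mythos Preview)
Your treatment of the main inequality and of parts (1) and (3) is essentially identical to the paper's: both use the reflection length $\ell(\sigma)=p-|\sigma|$, the triangle inequality, Biane's characterization of the equality case, conjugation invariance for $|\gamma\pi|=|\pi\gamma|$, and the specialization of (2) at $l=1$ together with $|\pi^2|=|\pi|+||\pi||$ to obtain (3). So far there is nothing to separate the two arguments.

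The divergence is entirely in part (2). You factor $\pi^l\gamma=\pi^{l+1}\cdot(\pi^{-1}\gamma)$, note that subadditivity already gives the inequality $|\pi^l\gamma|\ge|\pi^{l+1}|+1-|\pi|$, and then propose to prove equality by arguing that a reduced word for $\pi^{l+1}$ (transpositions within the blocks of $\pi$) concatenated with one for the Kreweras complement $\pi^{-1}\gamma$ remains reduced. This is a reasonable strategy, but as you yourself acknowledge, the ``no cancellation'' step is left as a sketch; in particular, for $l\neq 0,-1$ the permutation $\pi^{l+1}$ is typically \emph{not} noncrossing (e.g.\ $\pi=(1234)$ gives $\pi^2=(13)(24)$), so the planar/genus-zero heuristic you invoke does not apply directly, and one really has to track how the block-internal transpositions interact with those of the complement. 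This is the genuine gap in your argument.

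The paper avoids this difficulty altogether. It rewrites (2) as $|\pi^l\gamma|-1=|\pi^{l+1}|-|\pi|$, checks the base case $\pi=\gamma$, and then proves stability under concatenation $\pi=(\pi_1,\pi_2)$ using the elementary identity $(\gamma_1,\gamma_2)=\gamma\cdot(k,p)$ together with the standard fact that multiplying by a transposition changes the cycle count by exactly $\pm 1$. Since both sides of the rewritten formula are additive under concatenation, the induction closes (the formula being conjugation-invariant under $\gamma$, so cyclic rotation handles those $\pi$ that are not literally concatenations). This argument is short, self-contained, and never needs to analyze the geodesic structure of the product $\pi^{l+1}\cdot(\pi^{-1}\gamma)$; in exchange, your approach, if completed, would explain \emph{why} the identity holds in terms of the absolute order on $S_p$, which is conceptually appealing but costs more work.
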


\begin{proof}
Observe first that $\gamma\in S_p$ is the full cycle, $\gamma=(1\ldots p)$. Let also $1\in NC(p)$ be the $p$-block partition, so that the associated permutation $1\in S_p$ is the identity.

It is known that $l(\pi)=p-|\pi|$ is the length of permutations, so that $d(\pi,\sigma)=l(\pi\sigma^{-1})$ is the usual distance on $S_p$. Now the triangle inequality $d(1,\gamma)\leq d(1,\pi)+d(\pi,\gamma)$ reads $p-1\leq (p-|\pi|)+(p-|\pi\gamma^{-1}|)$, so we have $|\pi|+|\pi\gamma^{-1}|\leq p+1$, as claimed. For the assertion regarding the case where we have equality, see Biane \cite{bia}.

(1) This is clear from the first assertion, or from the well-known fact that $\gamma^{-1}\pi,\pi\gamma^{-1}$ have the same cycle structure as the right and left Kreweras complements of $\pi$.

(2) By using $|1|=p$ and $|\pi\gamma^{-1}|=p+1-|\pi|$, we must prove the following formula:
$$|\pi^l\gamma|-1=|\pi^{l+1}|-|\pi|$$

Observe first that this formula holds for $\pi=\gamma$. We will prove this formula by recurrence on the number of blocks of $\pi$. Since $\pi$ is noncrossing, it is enough to check the stability of the above formula by ``concatenation'', $\pi=(\pi_1,\pi_2)$. Since the right term of the above formula is additive with respect to concatenation, we have to prove that the left term is additive as well with respect to concatenation, i.e. we have to prove that:
$$|(\pi_1,\pi_2)^l\gamma|-1=(|\pi_1^l\gamma_1|-1)+(|\pi_2^l\gamma_2|-1)$$

With $\sigma_1=\pi_1^l$ and $\sigma_2=\pi_2^l$, the formula to be proved becomes:
$$|(\sigma_1,\sigma_2)\gamma|-1=(|\sigma_1\gamma_1|-1)+(|\sigma_2\gamma_2|-1)$$

In order to prove this latter formula, we use the following key identity:
$$(\gamma_1,\gamma_2)=\gamma(k,p)$$

More precisely, by using this identity, we have as claimed:
$$|\sigma_1\gamma_1|+|\sigma_2\gamma_2|
=|(\sigma_1,\sigma_2)(\gamma_1,\gamma_2)| 
=|(\sigma_1,\sigma_2)\gamma(k,p)|
=|(\sigma_1,\sigma_2)\gamma|+1$$

Here at right we have used the general fact that $|\pi(a,b)|=|\pi|+1$ when $a,b$ are in the same cycle of $\pi$, and $|\pi(a,b)|=|\pi|-1$ if $a,b$ are in different cycles of $\pi$, and the fact that, in the above situation, $k,p$ must be in the same cycle of $(\sigma_1,\sigma_2)\gamma$.

(3) This is the combinatorial lemma from our previous paper \cite{bne}, which follows as well from (2). Indeed, at $l=1$ the formula in (2) becomes $|1|+|\pi\gamma|=|\pi\gamma^{-1}|+|\pi^2|$, so:
\begin{eqnarray*}
|\pi\gamma|
&=&|\pi\gamma^{-1}|+|\pi^2|-|1|\\
&=&(p+1-|\pi|)+(|\pi|+||\pi||)-p\\
&=&||\pi||+1
\end{eqnarray*}

Together with the fact that $|\pi\gamma|=|\gamma\pi|$, this gives the result.
\end{proof}

Consider now a complex Wishart matrix of parameters $(dn,dm)$, $W=\frac{1}{dm}GG^*$, where $G$ is a $dn\times dm$ matrix with independent complex $\mathcal N(0,1)$ entries. As explained in the introduction, we are interested in the study of the ``block-modified'' versions of $W$, which are defined as follows:

\begin{definition}
Associated to any Wishart matrix $W$ of parameters $(dn,dm)$ and any linear map $\varphi:M_n(\mathbb C)\to M_n(\mathbb C)$ is the ``block-modified'' matrix $\tilde{W}=(id\otimes\varphi)W$.
\end{definition}

In what follows, the idea will be to relate the limiting $d\to\infty$ eigenvalue distribution of $\tilde{W}$ to the usual eigenvalue distribution of $\varphi$, viewed as a square matrix.

In order to view $\varphi$ as a square matrix, we use the following correspondence:

\begin{definition}
Associated to any linear map $\varphi:M_n(\mathbb C)\to M_n(\mathbb C)$ is the square matrix $\Lambda\in M_n(\mathbb C)\otimes M_n(\mathbb C)$ given by
$$\varphi(A)=(Tr\otimes id)[(t\otimes id)\Lambda\cdot (A\otimes 1)]$$
where $Tr$ and $t$ are the usual trace and transposition of the $n\times n$ matrices.
\end{definition}

Observe that the correspondence $\varphi\to\Lambda$ is bijective. We will use as well the correspondence in the other sense: every time we will have a matrix $\Lambda\in M_n(\mathbb C)\otimes M_n(\mathbb C)$, we could speak about the associated linear map $\varphi:M_n(\mathbb C)\to M_n(\mathbb C)$, via the above formula. This correspondence, when restricted to subclasses of (completely) positive maps, is known in quantum information theory as the Choi-Jamiolkowski isomorphism. See \cite{bzy}.

In order to get now more insight into the multiplicativity condition, we use the tensor planar algebra of Jones \cite{jon}, or, equivalently, the diagrammatic formalism in \cite{cn1}. We represent the matrices $\Lambda\in M_n(\mathbb C)\otimes M_n(\mathbb C)$ as vertical boxes with $2$ left legs and $2$ right legs, so that the correspondence $\Lambda\to\varphi$ is the one described by Figure \ref{fig:phi-Lambda}, where we write $\Lambda^\Gamma = (t\otimes id)\Lambda$ for the partially transposed map. 

It will be often convenient to write $\Lambda$ in usual matrix notation:
$$\Lambda=\sum_{abcd}\Lambda_{ab,cd}e_{ac}\otimes e_{bd}$$

With this convention, the associated linear map $\varphi$ is given by:
$$\varphi(A)_{bd}=\sum_{ac}\Lambda_{ab,cd}A_{ac}$$

Observe also that, in terms of the above square matrix $\Lambda$, we have (see also Figure \ref{fig:tildeW-Lambda} for the diagram):
$$\tilde{W}=(id\otimes Tr\otimes id)[(1\otimes (t\otimes id)\Lambda)(W\otimes 1)]$$

\begin{figure}
\centering
\subfigure[]{\label{fig:phi-Lambda}\includegraphics{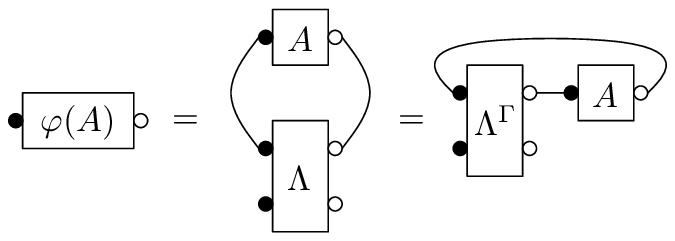}}\qquad
\subfigure[]{\label{fig:tildeW-Lambda}\includegraphics{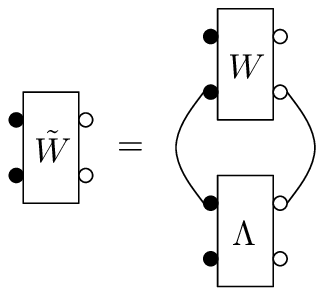}}
\caption{(a) The action of the map $\varphi$ through $\Lambda$, and (b) the block-modified Wishart matrix $\tilde W$ as the action of $\Lambda$ on $W$.}
\end{figure}

Finally, observe that, in matrix notation, the entries of $\tilde{W}$ are given by:
$$\tilde{W}_{ia,jb}=\sum_{ef}\Lambda_{ea,fb}W_{ie,jf}$$

Given a complex linear functional $\mathbb E:A\to\mathbb C$ and a permutation $\pi\in S_p$ we set $\mathbb E_\pi(a)=\sum_{b\in\pi}\mathbb E(a^{\# b})$ for any $a\in A$, where the sum is over all the cycles of $\pi$. We will use this notation throughout the reminder of the paper, often with $\mathbb E$ being the usual trace of matrices, $Tr:M_N(\mathbb C)\to\mathbb C$, or the normalized trace of matrices, $tr:M_N(\mathbb C)\to\mathbb C$.

\begin{theorem}
We have the asymptotic moment formula:
$$\lim_{d\to\infty}(\mathbb E\circ tr)((m\tilde{W})^p)=\sum_{\pi\in NC(p)}(mn)^{|\pi|}tr_{(\pi,\gamma)}(\Lambda)$$
\end{theorem}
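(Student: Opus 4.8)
The plan is to run the moment method: I would compute $\lim_{d\to\infty}(\mathbb E\circ tr)((m\tilde W)^p)$ directly by the Wick formula and match it term by term with the right-hand side. Since $tr=\frac{1}{dn}Tr$ on $M_{dn}(\mathbb C)$, the starting identity is $(\mathbb E\circ tr)((m\tilde W)^p)=\frac{m^p}{dn}\,\mathbb E\,Tr(\tilde W^p)$. First I would expand everything into scalar entries, using $\tilde W_{ia,jb}=\sum_{ef}\Lambda_{ea,fb}W_{ie,jf}$ together with $W_{ie,jf}=\frac{1}{dm}\sum_k G_{ie,k}\overline{G_{jf,k}}$. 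This writes $Tr(\tilde W^p)$ as a sum over the index families $i_1,\ldots,i_p\in\{1,\ldots,d\}$, the $a_\bullet,e_\bullet,f_\bullet\in\{1,\ldots,n\}$, and $k_1,\ldots,k_p\in\{1,\ldots,dm\}$, with cyclic conventions $i_{p+1}=i_1$ and $a_{p+1}=a_1$, weighted by $\prod_{r=1}^p\Lambda_{e_ra_r,f_ra_{r+1}}$ and by a product of $p$ holomorphic and $p$ antiholomorphic Gaussian factors.

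Next I would apply $\mathbb E$ via the Wick formula, which turns the Gaussian product into a sum over pairings $\sigma\in S_p$ of the factors $G_{i_re_r,k_r}$ with the factors $\overline{G_{i_{r+1}f_r,k_r}}$. The pairing $\sigma$ imposes $i_r=i_{\sigma(r)+1}$, $e_r=f_{\sigma(r)}$ and $k_r=k_{\sigma(r)}$. With $\gamma=(1\,2\,\ldots\,p)$ the full cycle, the $k$-constraint forces $k$ to be constant on the cycles of $\sigma$ and contributes $(dm)^{|\sigma|}$; the $i$-constraint reads $i_r=i_{(\gamma\sigma)(r)}$ and contributes $d^{|\gamma\sigma|}$; and the relation $f_t=e_{\sigma^{-1}(t)}$ eliminates the $f$ indices.

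Then I would read off the surviving $e,a$-sum as a contraction of $p$ copies of $\Lambda$ steered by two permutations: the second tensor legs (carrying the pairs $(a_r,a_{r+1})$) are wired by $\gamma$, coming from the cyclic trace, while the first legs (carrying $(e_r,e_{\sigma^{-1}(r)})$) are wired by $\sigma$, coming from the Wick pairing. This is precisely the $d$-independent quantity $Tr_{(\sigma,\gamma)}(\Lambda)$. Collecting the powers of $d$, the $\sigma$-term equals $d^{\,|\sigma|+|\gamma\sigma|-p-1}\,m^{|\sigma|}\,\frac{1}{n}Tr_{(\sigma,\gamma)}(\Lambda)$.

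Finally I would invoke Lemma 2.1: applied to $\pi=\sigma^{-1}$ (noting $|\sigma^{-1}|=|\sigma|$ and $|\gamma\sigma|=|(\gamma\sigma)^{-1}|=|\sigma^{-1}\gamma^{-1}|=|\pi\gamma^{-1}|$) it gives $|\sigma|+|\gamma\sigma|\le p+1$, with equality exactly when $\sigma^{-1}\in NC(p)$. Hence as $d\to\infty$ only the noncrossing pairings survive, and reindexing by $\pi=\sigma^{-1}\in NC(p)$ produces the stated sum. The normalization matches because $tr_{(\pi,\gamma)}$ divides the unnormalized contraction by $n$ to the number of loops $|\pi|+|\gamma|=|\pi|+1$, so $\frac{1}{n}Tr_{(\pi,\gamma)}(\Lambda)=n^{|\pi|}tr_{(\pi,\gamma)}(\Lambda)$, which combines with $m^{|\pi|}$ into the factor $(mn)^{|\pi|}$. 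The step I expect to be the main obstacle is the bookkeeping of the two permutations on the two tensor legs together with the $\sigma$ versus $\sigma^{-1}$ labeling: it is essential to feed the correct product into the genus bound of Lemma 2.1 so that the survivors are exactly $NC(p)$, and to track the loop count so that the normalization assembles into $(mn)^{|\pi|}\,tr_{(\pi,\gamma)}(\Lambda)$.
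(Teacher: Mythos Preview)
Your proof is correct and follows essentially the same route as the paper's: expand the entries, apply Wick, count the powers of $d$, and restrict to $NC(p)$ via Lemma 2.1. The only cosmetic differences are that the paper indexes the columns of $G$ by a pair $(j_s,b_s)$ rather than a single $k_s$, and that the paper's Wick bijection is your $\sigma^{-1}$, so it lands directly on $Tr_{(\pi,\gamma)}(\Lambda)$ without a final reindexing; your own bookkeeping is consistent since the first-leg pattern $(e_r,e_{\sigma^{-1}(r)})$ is, in the paper's convention $Tr_{(\pi,\sigma)}(\Lambda)=\sum_{a,e}\prod_s\Lambda_{e_sa_s,e_{\pi(s)}a_{\sigma(s)}}$, exactly $Tr_{(\sigma^{-1},\gamma)}(\Lambda)$, and your substitution $\pi=\sigma^{-1}$ then gives the stated $tr_{(\pi,\gamma)}(\Lambda)$.
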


\begin{proof}
According to the above formula for the entries of $\tilde{W}$, we have:
\begin{align*}
tr(\tilde{W}^p)
&=(dn)^{-1}\sum_{i_ra_r}\prod_s\tilde{W}_{i_sa_s,i_{s+1}a_{s+1}}\\
&=(dn)^{-1}\sum_{i_ra_re_rf_r}\prod_s\Lambda_{e_sa_s,f_sa_{s+1}}W_{i_se_s,i_{s+1}f_s}\\
&=(dn)^{-1}(dm)^{-p}\sum_{i_ra_re_rf_rj_rb_r}\prod_s\Lambda_{e_sa_s,f_sa_{s+1}}G_{i_se_s,j_sb_s}\bar{G}_{i_{s+1}f_s,j_sb_s}
\end{align*}

The average of the general term can be computed by the Wick rule:
\begin{align*}
\mathbb E\left(\prod_sG_{i_se_s,j_sb_s}\bar{G}_{i_{s+1}f_s,j_sb_s}\right)
&=\#\{\pi\in S_p|i_{\pi(s)}=i_{s+1},\,e_{\pi(s)}=f_s,\,j_{\pi(s)}=j_s,\,b_{\pi(s)}=b_s\}
\end{align*}

Let us look now at the above sum. The $i,j,b$ indices range over sets having respectively $d,d,m$ elements, and they have to be constant under the action of $\pi\gamma^{-1},\pi,\pi$. Thus when summing over these $i,j,b$ indices we simply obtain a $d^{|\pi\gamma^{-1}|}d^{|\pi|}m^{|\pi|}$ factor, so we get:
\begin{align*}
(\mathbb E\circ tr)(\tilde{W}^p)
&=(dn)^{-1}(dm)^{-p}\sum_{\pi\in S_p}d^{|\pi\gamma^{-1}|}(dm)^{|\pi|}\sum_{a_re_r}\prod_s\Lambda_{e_sa_s,e_{\pi(s)}a_{s+1}}\\
&=n^{-1}m^{-p}\sum_{\pi\in S_p}d^{|\pi|+|\pi\gamma^{-1}|-p-1}m^{|\pi|}Tr_{(\pi,\gamma)}(\Lambda)\\
&=m^{-p}\sum_{\pi\in S_p}d^{|\pi|+|\pi\gamma^{-1}|-p-1}(mn)^{|\pi|}tr_{(\pi,\gamma)}(\Lambda)
\end{align*}

By Lemma 2.1, with $d\to\infty$ the sum restricts over $\pi\in NC(p)$, and we are done.
\end{proof}

One can also prove the result using the graphical Wick technique developed in \cite{cn2}. There is an obvious relation between the above result and the combinatorics of the compound free Poisson laws. More precisely, we have the following similar result:

\begin{theorem}
If $\Lambda=\Lambda^*$ then the moments of $\pi_{mn\rho}$, with $\rho=law(\Lambda)$, are:
$$M_p=\sum_{\pi\in NC(p)}(mn)^{|\pi|}tr_{(\pi,\pi)}(\Lambda)$$
\end{theorem}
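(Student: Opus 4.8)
The plan is to reduce everything to the cumulant description of compound free Poisson laws furnished by Theorem 1.8, and then to unpack the combinatorial meaning of $tr_{(\pi,\pi)}(\Lambda)$. First I would observe that the hypothesis $\Lambda=\Lambda^*$ makes $\Lambda$ a self-adjoint element of $M_n(\mathbb C)\otimes M_n(\mathbb C)=M_{n^2}(\mathbb C)$, so that $\rho=\mathrm{law}(\Lambda)$ is a genuine real probability measure, and hence so is the measure $mn\rho$. This is exactly the setting in which Theorem 1.8 applies, and it is also why the self-adjointness assumption appears in the statement.

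Next I would apply Theorem 1.8 to $\mu=mn\rho$: it tells us that the free cumulants of $\pi_{mn\rho}$ are the moments of $mn\rho$. Since $\rho$ is the spectral distribution of $\Lambda$ with respect to the normalized trace $tr$ of $M_{n^2}(\mathbb C)$, the $p$-th moment of $mn\rho$ equals $mn\cdot tr(\Lambda^p)$. Thus I obtain $\kappa_p(\pi_{mn\rho})=mn\cdot tr(\Lambda^p)$ for every $p$. I would then invoke the moment-cumulant formula $M_p=\sum_{\pi\in NC(p)}\kappa_\pi$, with $\kappa_\pi=\prod_{b\in\pi}\kappa_{\# b}$, and substitute the cumulants just computed:
$$\kappa_\pi=\prod_{b\in\pi}\bigl(mn\cdot tr(\Lambda^{\# b})\bigr)=(mn)^{|\pi|}\prod_{b\in\pi}tr(\Lambda^{\# b}).$$
At this point the entire statement reduces to the single identification $\prod_{b\in\pi}tr(\Lambda^{\# b})=tr_{(\pi,\pi)}(\Lambda)$.

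The step I expect to require the most care is precisely this last identification, since it is where the generalized-trace notation must be matched with an honest product of traces of powers. Unwinding the definition $Tr_{(\pi,\pi)}(\Lambda)=\sum_{e_r,a_r}\prod_s\Lambda_{e_sa_s,\,e_{\pi(s)}a_{\pi(s)}}$ (the same expression as in Theorem 2.4, but now with $\gamma$ replaced by $\pi$), I would note that when both index contractions are dictated by the same permutation $\pi$, the summation factors over the cycles of $\pi$: a cycle of length $k$ contributes exactly $Tr(\Lambda^k)$, where $\Lambda$ is read as an ordinary $n^2\times n^2$ matrix on the combined index $(e,a)$. Hence $Tr_{(\pi,\pi)}(\Lambda)=\prod_{b\in\pi}Tr(\Lambda^{\# b})$.

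Finally, inserting the normalization used in Theorem 2.4, namely $tr_{(\sigma,\tau)}=n^{-|\sigma|-|\tau|}Tr_{(\sigma,\tau)}$ (which in the proof of Theorem 2.4 appears with $|\gamma|=1$, giving the factor $n^{-|\pi|-1}$), together with $Tr=n^2\,tr$ on $M_{n^2}(\mathbb C)$, I would compute
$$tr_{(\pi,\pi)}(\Lambda)=n^{-2|\pi|}\prod_{b\in\pi}Tr(\Lambda^{\# b})=n^{-2|\pi|}\prod_{b\in\pi}\bigl(n^2\,tr(\Lambda^{\# b})\bigr)=\prod_{b\in\pi}tr(\Lambda^{\# b}),$$
so that the powers of $n$ cancel. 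Combining this with the expression for $\kappa_\pi$ above yields $M_p=\sum_{\pi\in NC(p)}(mn)^{|\pi|}tr_{(\pi,\pi)}(\Lambda)$, which completes the argument. The comparison with the formula of Theorem 2.4 is then immediate: the two statements differ only in that the second contracting permutation is $\pi$ here versus the full cycle $\gamma$ there.
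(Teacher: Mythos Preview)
Your argument is correct and follows exactly the route taken in the paper: invoke Theorem 1.6 (your ``Theorem 1.8'') to identify the free cumulants of $\pi_{mn\rho}$ as $\kappa_p=mn\cdot tr(\Lambda^p)$, and then apply Speicher's moment--cumulant formula. The paper compresses the identification $\prod_{b\in\pi}tr(\Lambda^{\#b})=tr_{(\pi,\pi)}(\Lambda)$ into the single line $\kappa_p=mn\cdot tr_{(\gamma,\gamma)}(\Lambda)$ and then the multiplicative extension over blocks; your unpacking of this via $Tr_{(\pi,\sigma)}=n^{|\pi|+|\sigma|}tr_{(\pi,\sigma)}$ and $Tr=n^2\,tr$ on $M_{n^2}(\mathbb C)$ is accurate and makes explicit what the paper leaves implicit.
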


\begin{proof}
We know from Theorem 1.6 that the free cumulants of $\pi_{mn\rho}$ are the moments of $mn\rho$, given by $\kappa_p=mn\cdot tr(\Lambda^p)=mn\cdot tr_{(\gamma,\gamma)}(\Lambda)$. Together with Speicher's moment-cumulant formula, explained in section 1 above, this gives the result.
\end{proof}

In what follows we will try to exploit the obvious similarity between Theorem 2.4 and Theorem 2.5. We will split our study into 2 parts: the unital case will be investigated in section 3, and the general, non-unital case will be investigated in section 4.

\section{The unital case}

A linear map $\varphi:M_n(\mathbb C)\to M_n(\mathbb C)$ is called self-adjoint if $A=A^*$ implies $\varphi(A)=\varphi(A)^*$. This is the same as asking for the corresponding matrix $\Lambda$ to be self-adjoint.

We use the notation $\mathbb E_\rho$ from the previous section, in the case where $\mathbb E=tr$ is the trace, and $\rho=(\pi,\sigma)$ is a product of permutations, via the embedding $S_p\times S_q\subset S_{pq}$.

\begin{theorem}
Assume that $\varphi$ is self-adjoint. If $tr_{(\pi,\gamma)}(\Lambda)=tr_{(\pi,\pi)}(\Lambda)$ for any $p\in\mathbb N$ and any $\pi\in NC(p)$, then $m\tilde{W}\sim\pi_{mn\rho}$, with $\rho=law(\Lambda)$.
\end{theorem}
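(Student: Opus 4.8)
The plan is to identify the limiting moments of $m\tilde W$ with the moments of $\pi_{mn\rho}$ and then conclude by the method of moments. All the genuinely hard work has already been done in Theorems 2.4 and 2.5; what remains is a term-by-term comparison of two combinatorial sums, and this comparison is exactly what the planarity hypothesis delivers.

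First I would invoke Theorem 2.4, which computes the $d\to\infty$ limit of the expected normalized-trace moments of $m\tilde W$:
$$\lim_{d\to\infty}(\mathbb E\circ tr)((m\tilde{W})^p)=\sum_{\pi\in NC(p)}(mn)^{|\pi|}tr_{(\pi,\gamma)}(\Lambda).$$
Next, since $\varphi$ is self-adjoint, the associated matrix $\Lambda$ is self-adjoint, so Theorem 2.5 applies and gives the moments of $\pi_{mn\rho}$ with $\rho=law(\Lambda)$:
$$M_p=\sum_{\pi\in NC(p)}(mn)^{|\pi|}tr_{(\pi,\pi)}(\Lambda).$$
These two sums range over the same index set $NC(p)$ and carry the same weights $(mn)^{|\pi|}$; the only discrepancy is the trace factor, $tr_{(\pi,\gamma)}(\Lambda)$ against $tr_{(\pi,\pi)}(\Lambda)$. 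The assumption $tr_{(\pi,\gamma)}(\Lambda)=tr_{(\pi,\pi)}(\Lambda)$, imposed for every $p$ and every $\pi\in NC(p)$, forces these factors to agree termwise, so the two moment sequences coincide: $\lim_{d\to\infty}(\mathbb E\circ tr)((m\tilde{W})^p)=M_p$ for all $p\in\mathbb N$.

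It then remains to promote equality of moments to equality of distributions. Since $\Lambda=\Lambda^*$ is a fixed finite matrix, $\rho$ is a compactly supported real measure, and hence $\pi_{mn\rho}$, being the law of a finite sum of bounded free self-adjoint elements, is itself compactly supported and therefore uniquely determined by its moment sequence $(M_p)$. The limiting eigenvalue distribution of $m\tilde W$ has precisely these moments, so it must equal $\pi_{mn\rho}$; this is the assertion $m\tilde W\sim\pi_{mn\rho}$.

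As for the main obstacle: there is essentially no analytic difficulty left in this particular argument, since the Wick computation of Theorem 2.4 and the free-cumulant identification of Theorem 2.5 have already isolated the relevant moment formulas. The entire conceptual content is compressed into the hypothesis $tr_{(\pi,\gamma)}(\Lambda)=tr_{(\pi,\pi)}(\Lambda)$, and the real difficulty — deferred to the \emph{planar} analysis of the later sections — lies not here but in determining for which maps $\varphi$ this identity genuinely holds.
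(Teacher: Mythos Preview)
Your proof is correct and follows exactly the paper's approach: invoke Theorems 2.4 and 2.5 to reduce the claim $m\tilde W\sim\pi_{mn\rho}$ to the equality of the two $NC(p)$-sums, and then use the hypothesis to match them term by term. Your additional paragraph on compact support and determinacy by moments merely makes explicit what the paper leaves implicit.
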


\begin{proof}
We know from Theorem 2.4 and Theorem 2.5 that the formula $m\tilde{W}\sim\pi_{mn\rho}$ is equivalent to the following formula, which should be valid for any $p\in\mathbb N$:
$$\sum_{\pi\in NC(p)}(mn)^{|\pi|}tr_{(\pi,\gamma)}(\Lambda)=\sum_{\pi\in NC(p)}(mn)^{|\pi|}tr_{(\pi,\pi)}(\Lambda)$$

Now since in the case $tr_{(\pi,\gamma)}(\Lambda)=tr_{(\pi,\pi)}(\Lambda)$ this formula holds, we are done.
\end{proof}

The point now is that, in all the examples that we have, the above result applies only when $\varphi$ is unital modulo scalars. The non-unital case, which requires a more subtle combination of Theorem 2.4 and Theorem 2.5, will be discussed in the next section.

For a matrix $A\in M_n(\mathbb C)$ we denote by $A^\delta\in M_n(\mathbb C)$ its diagonal.

\begin{theorem}
The formula $tr_{(\pi,\gamma)}(\Lambda)=tr_{(\pi,\pi)}(\Lambda)$ holds for any map which is unital modulo scalars, in the sense that $\varphi(1)=c1$ with $c\in\mathbb C-\{0\}$, of the following form:
\begin{enumerate}
\item $\varphi(A)=Tr(BA)C$.

\item $\varphi(A)=BAC$.

\item $\varphi(A)=BA^tC$.

\item $\varphi(A)=B(EAF)^\delta C$.
\end{enumerate}

\noindent In addition, the set of above maps is stable by composition and inversion, and the formula $tr_{(\pi,\gamma)}(\Lambda)=tr_{(\pi,\pi)}(\Lambda)$ is stable by taking tensor products, and multiplying by scalars.
\end{theorem}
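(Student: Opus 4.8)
The plan is to treat the four forms uniformly: for each, read off the square matrix $\Lambda$ from the defining relation $\varphi(A)_{bd}=\sum_{ac}\Lambda_{ab,cd}A_{ac}$, and then evaluate the two generalized traces directly as index contractions. The organizing observation is that in $Tr_{(\pi,\sigma)}(\Lambda)=\sum\prod_s\Lambda_{e_sf_s,\,e_{\pi(s)}f_{\sigma(s)}}$ the first tensor leg (the $e$-index) is joined according to the left permutation and the second leg (the $f$-index) according to the right one. Since $Tr_{(\pi,\sigma)}=n^{|\pi|+|\sigma|}\,tr_{(\pi,\sigma)}$, it suffices to show that $Tr_{(\pi,\gamma)}(\Lambda)$ and $Tr_{(\pi,\pi)}(\Lambda)$, after dividing by their respective normalizations $n^{|\pi|+1}$ and $n^{2|\pi|}$, agree for every $\pi\in NC(p)$. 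In each case the function of the unitality hypothesis is to pin down enough structure on $\Lambda$ that both contractions collapse to the same power of $n$.

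First I would dispatch the two easy cases. In case (1) one finds $\Lambda=B^t\otimes C$, and $\varphi(1)=Tr(B)\,C=c1$ forces $C$ to be a scalar $\lambda 1$; because $\Lambda$ is then a tensor product whose second factor is scalar, the contraction factorizes, the second leg contributing a factor $\lambda^p$ that is independent of $\sigma$, so $tr_{(\pi,\gamma)}(\Lambda)=tr_{(\pi,\pi)}(\Lambda)$ is immediate. In case (2), $\varphi(1)=BC=c1$ makes $\varphi$ a conjugation $A\mapsto cBAB^{-1}$; summing out the $e$-indices turns every adjacent pair into $BB^{-1}=1$ and leaves a product of Kronecker deltas, the number of independent indices being $|\gamma\pi^{-1}|$ in the $(\pi,\gamma)$ contraction and $p$ in the $(\pi,\pi)$ contraction, so the identity follows from $|\gamma\pi^{-1}|=p+1-|\pi|$, i.e. Lemma 2.1(1).

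The substance is in cases (3) and (4). In case (3) unitality gives the transposed conjugation $A\mapsto cBA^tB^{-1}$; after the same $BB^{-1}=1$ reduction the surviving index must be constant along the cycles of $\gamma\pi$ in the $(\pi,\gamma)$ contraction and along the cycles of $\pi^2$ in the $(\pi,\pi)$ contraction, so the two exponents are $|\gamma\pi|$ and $|\pi^2|$. Matching them after normalization is exactly the identity $|\pi\gamma|=|\pi^2|-|\pi|+1$ furnished by Lemma 2.1(2) at $l=1$ (equivalently $|\pi^2|=|\pi|+||\pi||$), and this is the step I expect to be the real crux, since it is the only place where the noncrossing/even-block combinatorics genuinely enters. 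In case (4) the decisive step is algebraic: writing the unitality condition as $B(EF)^\delta C=c1$ with $c\neq 0$ forces $B$, $(EF)^\delta$, $C$ all to be invertible, whence $CB=c\,((EF)^\delta)^{-1}$ is \emph{diagonal}; once the relevant factor of $\Lambda$ is diagonal the internal summation index is pinned to be constant along the cycles of $\gamma$ (resp.\ $\pi$), and a short count shows both contractions equal $c^p\,n^{-|\pi|}$.

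For the stability statements: scaling $\varphi$ by $\lambda$ scales $\Lambda$, and hence both contractions, by $\lambda^p$, so the identity survives; and for tensor products one checks that, up to the canonical reshuffle identifying $M_{n_1n_2}\otimes M_{n_1n_2}$ with $(M_{n_1}\otimes M_{n_1})\otimes(M_{n_2}\otimes M_{n_2})$, the matrix of $\varphi_1\otimes\varphi_2$ is $\Lambda_1\otimes\Lambda_2$, so that each generalized trace factorizes, $tr_{(\pi,\sigma)}(\Lambda)=tr_{(\pi,\sigma)}(\Lambda_1)\,tr_{(\pi,\sigma)}(\Lambda_2)$, and equality for the factors yields it for the product. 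Stability of the catalogue under composition and inversion I would settle by a finite case-check tracking how $B,C,E,F$ and the transposes combine: the conjugation-type maps (2) and (3) compose to maps of the same two types and are invertible with inverses again of those types, and pre- or post-composing a type (1) or (4) map with a type (2)/(3) map stays within the list. The genuinely delicate point, which I would save for last, is keeping the diagonal-truncation form (4) stable under composition, where the bookkeeping of the internal index is least transparent and where any additional hypotheses would surface.
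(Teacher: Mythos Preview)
Your proposal is correct and follows essentially the same route as the paper: compute $\Lambda$ explicitly in each of the four cases, evaluate $Tr_{(\pi,\sigma)}(\Lambda)$ as an index contraction, specialize to $\sigma\in\{\gamma,\pi\}$, and match the resulting powers of $n$ using the identities of Lemma~2.1; the tensor-product and scalar stability are handled by the same factorization $tr_{(\pi,\sigma)}(\Lambda_1\otimes\Lambda_2)=tr_{(\pi,\sigma)}(\Lambda_1)\,tr_{(\pi,\sigma)}(\Lambda_2)$, and closure under composition/inversion by a finite case-check (the paper records this as a $4\times 4$ table). One small point worth noting: in case~(4) you make explicit that $B(EF)^\delta C=c1$ with $c\neq 0$ forces $B,(EF)^\delta,C$ invertible and hence $CB=c((EF)^\delta)^{-1}$ diagonal --- the paper uses the equivalent form $(CB)(EF)^\delta=c1$ and passes directly to the $n^{|\pi\wedge\sigma|}$ count, so your version actually spells out a step the paper leaves implicit.
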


\begin{proof}
If we write $\Lambda=\sum_{abcd}\Lambda_{ab,cd}e_{ac}\otimes e_{bd}$, then $\varphi(A)_{bd}=\sum_{ac}\Lambda_{ab,cd}A_{ac}$. This shows that in the cases (1-3) we have $\Lambda_{ab,cd}=B_{ca}C_{bd},B_{ba}C_{cd},B_{bc}C_{ad}$ respectively.

Let us compute the numbers $Tr_{(\pi,\sigma)}(\Lambda)=\sum_{a_re_r}\prod_s\Lambda_{e_sa_s,e_{\pi(s)}a_{\sigma(s)}}$:
\begin{eqnarray*}
Tr_{(\pi,\sigma)}(\Lambda^1)
&=&\sum_{a_re_r}\prod_sB_{e_{\pi(s)}e_s}C_{a_sa_{\sigma(s)}}=Tr_\pi(B)Tr_\sigma(C)\\
Tr_{(\pi,\sigma)}(\Lambda^2)
&=&\sum_{a_re_r}\prod_sB_{a_se_s}C_{e_{\pi(s)}a_{\sigma(s)}}=Tr_{\pi\sigma^{-1}}(BC)\\
Tr_{(\pi,\sigma)}(\Lambda^3)
&=&\sum_{a_re_r}\prod_sB_{a_se_{\pi(s)}}C_{e_sa_{\sigma(s)}}=Tr_{\sigma\pi}(BC)
\end{eqnarray*}

In the case (4) now, we have $\Lambda_{ab,cd}=\sum_xB_{bx}E_{xa}F_{cx}C_{xd}$, and we get:
$$Tr_{(\pi,\sigma)}(\Lambda^4)
=\sum_{a_re_rx_r}\prod_sB_{a_sx_s}E_{x_se_s}F_{e_{\pi(s)}x_s}C_{x_sa_{\sigma(s)}}
=\sum_{x_r}\prod_s(CB)_{x_sx_{\sigma(s)}}(EF)_{x_{\pi(s)}x_s}$$

Assume now that the maps $\varphi$ are unital modulo scalars, with $C=c1$ in (1), with $BC=c1$ in (2,3), and with $(CB)(EF)^\delta=c1$ in (4). We get that:
\begin{eqnarray*}
tr_{(\pi,\sigma)}(\Lambda^1)
&=&tr_\pi(B)tr_\sigma(C)=c^ptr_\pi(B)\\
tr_{(\pi,\sigma)}(\Lambda^2)
&=&n^{|\pi\sigma^{-1}|-|\pi|-|\sigma|}tr_{\pi\sigma^{-1}}(BC)=n^{|\pi\sigma^{-1}|-|\pi|-|\sigma|}c^p\\
tr_{(\pi,\sigma)}(\Lambda^3)
&=&n^{|\sigma\pi|-|\pi|-|\sigma|}tr_{\sigma\pi}(BC)=n^{|\sigma\pi|-|\pi|-|\sigma|}c^p\\
tr_{(\pi,\sigma)}(\Lambda^4)
&=&n^{-|\pi|-|\sigma|}\sum_{x\leq\ker(\pi\wedge\sigma)}(CB)_{xx}(EF)_{xx}=n^{|\pi\wedge\sigma|-|\pi|-|\sigma|}c^p
\end{eqnarray*}

We claim that we have $tr_{(\pi,\gamma)}(\Lambda)=tr_{(\pi,\pi)}(\Lambda)$ in all cases. Indeed, in case (1) this is clear, and in (2-4) this follows from the following equalities for the exponents of $n$:
\begin{eqnarray*}
(p+1-|\pi|)-|\pi|-1&=&p-|\pi|-|\pi|\\
(||\pi||+1)-|\pi|-1&=&(|\pi|+||\pi||)-|\pi|-|\pi|\\
1-|\pi|-1&=&|\pi|-|\pi|-|\pi|
\end{eqnarray*}

Regarding now the stability properties, the inverses of the maps (1-4) are also of type (1-4). Also, the maps (1-4) are stable by composition, the formula for $i\circ j$ being:
$$\begin{matrix}
i\backslash j&1&2&3&4\\
1&1&1&1&1\\
2&1&2&3&4\\
3&1&3&2&4\\
4&1&4&4&4
\end{matrix}$$

For the tensor product assertion now, assume that $\varphi=\varphi^1\otimes\varphi^2$. We have then $\Lambda=\Lambda^1\otimes\Lambda^2$, so in matrix notation $\Lambda_{a\alpha b\beta,c\gamma d\delta}=\Lambda^1_{ab,cd}\Lambda^2_{\alpha\beta,\gamma\delta}$, and we get:
\begin{eqnarray*}
Tr_{(\pi,\sigma)}(\Lambda)
&=&\sum_{a_r\alpha_re_r\varepsilon_r}\prod_s\Lambda_{e_s\varepsilon_sa_s\alpha_s,e_{\pi(s)}\varepsilon_{\pi(s)}a_{\sigma(s)}\alpha_{\sigma(s)}}\\
&=&\sum_{a_r\alpha_re_r\varepsilon_r}\prod_s\Lambda^1_{e_sa_s,e_{\pi(s)}a_{\sigma(s)}}\Lambda^2_{\varepsilon_s\alpha_s,\varepsilon_{\pi(s)}\alpha_{\sigma(s)}}\\
&=&Tr_{(\pi,\sigma)}(\Lambda^1)Tr_{(\pi,\sigma)}(\Lambda^2)
\end{eqnarray*}

It follows that $tr_{(\pi,\sigma)}(\Lambda)=tr_{(\pi,\sigma)}(\Lambda^1)tr_{(\pi,\sigma)}(\Lambda^2)$, and we are done. Finally, the assertion regarding the multiplication by scalars is clear.
\end{proof}

As a first consequence, we obtain some previously known results, from \cite{cn2}, \cite{mpa}, \cite{bne}:

\begin{proposition}
We have the following results:
\begin{enumerate}
\item $t(id\otimes tr(.)1)W\sim\pi_t$, where $t=mn$.

\item $tW\sim\pi_t$, where $t=m/n$.

\item $m(id\otimes t)W\sim law(\alpha-\beta)$, where $\alpha,\beta$ are free Poisson $(m(n\pm 1)/2)$, free. 

\item $m(id\otimes(.)^\delta)W\sim\pi_m$.
\end{enumerate}
\end{proposition}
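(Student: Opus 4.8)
The plan is to read off all four statements as special cases of Theorem 3.1, so that the only real work is to identify each $\varphi$ among the maps of Theorem 3.2, to compute the associated matrix $\Lambda$ together with its spectral distribution $\rho=law(\Lambda)$, and finally to unwind the definition of $\pi_{mn\rho}$ from Definition 1.4. Each of the four maps is essentially the simplest instance of one of the four types in Theorem 3.2, is manifestly self-adjoint (equivalently $\Lambda=\Lambda^*$), and is unital with $c=1$; hence Theorem 3.2 gives $tr_{(\pi,\gamma)}(\Lambda)=tr_{(\pi,\pi)}(\Lambda)$ and Theorem 3.1 then yields $m\tilde{W}\sim\pi_{mn\rho}$. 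It only remains to compute $\rho$ in each case and to adjust the scalar normalization in those statements which are phrased in terms of $t\tilde{W}$ rather than $m\tilde{W}$.

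First I would compute the four matrices $\Lambda$, using the formulas $\Lambda_{ab,cd}=B_{ca}C_{bd}$, $B_{ba}C_{cd}$, $B_{bc}C_{ad}$ recorded in the proof of Theorem 3.2. In case (1), the map $\varphi(A)=tr(A)1$ is of type (1) with $B=\frac{1}{n}1$ and $C=1$, which gives $\Lambda=\frac{1}{n}(1\otimes 1)$, a scalar matrix, so $\rho=\delta_{1/n}$. In case (2), the map $\varphi=id$ is of type (2) with $B=C=1$, giving $\Lambda=\sum_{ac}e_{ac}\otimes e_{ac}$, which is $n$ times the rank-one projection onto the maximally entangled vector; its only nonzero eigenvalue is $n$, so $\rho=\frac{1}{n^2}\delta_n+\frac{n^2-1}{n^2}\delta_0$. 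In case (3), the map $\varphi=t$ is of type (3) with $B=C=1$, giving $\Lambda=\sum_{ab}e_{ab}\otimes e_{ba}$, the flip operator, whose eigenvalues are $+1$ and $-1$ with multiplicities $n(n+1)/2$ and $n(n-1)/2$, so $\rho=\frac{n+1}{2n}\delta_1+\frac{n-1}{2n}\delta_{-1}$. In case (4), the map $\varphi=(.)^\delta$ is of type (4) with $B=C=E=F=1$, giving $\Lambda=\sum_a e_{aa}\otimes e_{aa}$, the projection onto the diagonal subspace, with eigenvalue $1$ of multiplicity $n$, so $\rho=\frac{1}{n}\delta_1+\frac{n-1}{n}\delta_0$.

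Next I would feed $mn\rho$ into Definition 1.4, using that an atom at $z=0$ contributes the zero variable and may simply be dropped. In case (3) this gives at once $\pi_{mn\rho}=law(\alpha-\beta)$ with $\alpha,\beta$ free Poisson of parameters $m(n+1)/2$ and $m(n-1)/2$ and free, exactly the claimed law; in case (4) it gives the single surviving atom at $z=1$ with mass $m$, hence $\pi_{mn\rho}=\pi_m$. In cases (1) and (2) the statement is phrased via $t\tilde{W}$, so one extra scalar rescaling is needed: starting from $m\tilde{W}\sim\pi_{mn\rho}$ and using that multiplying a variable by $\lambda$ rescales its distribution by the same factor, I would pass to $t\tilde{W}=\lambda\cdot(m\tilde{W})$, with $\lambda=n$ in case (1) and $\lambda=1/n$ in case (2), and check that the single surviving free Poisson, of parameter $mn$ at the atom $1/n$ in case (1) and of parameter $m/n$ at the atom $n$ in case (2), rescales precisely into the standard free Poisson $\pi_t$ of parameter $t=mn$ resp. $t=m/n$.

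The only genuine content is the spectral bookkeeping of the middle paragraph, namely recognizing $\Lambda$ as $n$ times the maximally entangled projection in case (2) and as the flip operator in case (3), and counting the $\pm1$ multiplicities in the latter, together with keeping the two competing normalizations ($t\tilde{W}$ versus $m\tilde{W}$) straight in the final step. I expect no further obstacle, since everything else is a direct substitution into Theorems 3.1 and 3.2 and into Definition 1.4.
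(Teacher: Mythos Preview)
Your proposal is correct and follows essentially the same approach as the paper: identify each $\varphi$ as a special case of the four families in Theorem 3.2, apply Theorem 3.1 to get $m\tilde{W}\sim\pi_{mn\rho}$, compute $\Lambda$ and its spectrum to determine $\rho$, and unwind Definition 1.4. The only cosmetic difference is in case (1), where the paper takes $\varphi(A)=Tr(A)1$ with $B=C=1$ (so that $\Lambda=1$ and $\rho=\delta_1$) and then absorbs the factor $n$ relating $Tr$ and $tr$ at the very end, whereas you take $\varphi(A)=tr(A)1$ with $B=\tfrac{1}{n}1$ (so that $\Lambda=\tfrac{1}{n}1$ and $\rho=\delta_{1/n}$) and rescale the resulting free Poisson; both routes land on $t(id\otimes tr(.)1)W\sim\pi_t$ with $t=mn$.
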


\begin{proof}
Our claim is that (1-4) above correspond via Theorem 3.1 to the assertions (1-4) in Theorem 3.2, at $B=C=1$. Indeed, let us see what happens in this case:

(1) Here we have $\varphi(A)=Tr(A)1$, hence $\tilde{W}=(id\otimes Tr(.)1)W$. Also, we have $\Lambda_{ab,cd}=\delta_{ca}\delta_{bd}$, hence $\Lambda=\sum_{ab}e_{aa}\otimes e_{bb}$ is the identity matrix: $\Lambda=1$. Thus $\rho=\delta_1$, so $\pi_{mn\rho}=\pi_{mn}$, so Theorem 3.2 says at $B=C=1$ that we have $m\tilde{W}\sim\pi_{mn}$, as claimed.

(2) Here we have $\varphi(A)=A$, so $\tilde{W}=W$. Also, we have $\Lambda_{ab,cd}=\delta_{ab}\delta_{cd}$, hence $\Lambda=\sum_{ac}e_{ac}\otimes e_{ac}$, so we have $\Lambda=nP$, where $P$ is the rank one projection on the vector $\sum_ae_a\otimes e_a\in\mathbb C^n\otimes\mathbb C^n$. Thus $\rho=\frac{n^2-1}{n^2}\delta_0+\frac{1}{n^2}\delta_n$, so $mn\rho=\frac{m(n^2-1)}{n}\delta_0+\frac{m}{n}\delta_n$, so $\pi_{mn\rho}=law(n\alpha)$, where $\alpha$ is free Poisson $(m/n)$. Thus Theorem 3.2 says at $B=C=1$ that we have $mW\sim law(n\alpha)$, and by dividing by $n$ we get $(m/n)W\sim law(\alpha)$, as claimed.

(3) Here we have $\varphi(A)=A^t$, so $\tilde{W}=(id\otimes t)W$. Also, we have $\Lambda_{ab,cd}=\delta_{bc}\delta_{ad}$, so $\Lambda=\sum_{ac}e_{ac}\otimes e_{ca}$ is the flip: $\Lambda(e_c\otimes e_a)=e_a\otimes e_c$. Thus $\rho=\frac{n-1}{2n}\delta_{-1}+\frac{n+1}{2n}\delta_1$, so $mn\rho=\frac{m(n-1)}{2}\delta_{-1}+\frac{m(n+1)}{2}\delta_1$, so $\pi_{mn\rho}=law(\alpha-\beta)$, where $\alpha,\beta$ are as in the statement. Thus Theorem 3.2 says at $B=C=1$ that we have $m\tilde{W}\sim law(\alpha-\beta)$, as claimed.

(4) Here we have $\varphi(A)=A^\delta$, so $\tilde{W}=(id\otimes(.)^\delta)W$. Also, we have $\Lambda_{ab,cd}=\delta_{a,b,c,d}$, hence $\Lambda=\sum_ae_{aa}\otimes e_{aa}$ is the orthogonal projection on $span(e_a\otimes e_a)\subset\mathbb C^n\otimes\mathbb C^n$. Thus we have $\rho=\frac{n-1}{n}\delta_0+\frac{1}{n}\delta_1$, so $mn\rho=m(n-1)\delta_0+m\delta_1$, so $\pi_{mn\rho}=\pi_m$. Thus Theorem 3.2 says at $B=C=1$ that we have $m\tilde{W}\sim\pi_m$, as claimed.
\end{proof}

We have as well the following generalization, obtained by taking a tensor product:

\begin{proposition}
For a linear map of type $\varphi=id\otimes(tr(.)1\otimes id\otimes t\otimes (.)^\delta)$ we have $t\tilde{W}\sim law(\alpha-\beta)$, where $\alpha,\beta$ are free Poisson $(t(n_3\pm 1)/2)$, free, with $t=mn_1/n_2$.
\end{proposition}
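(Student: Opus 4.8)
The plan is to realize the given $\varphi$ as a tensor product of four elementary maps and then feed it into Theorems 3.1 and 3.2. Concretely, I write $\varphi=\varphi^1\otimes\varphi^2\otimes\varphi^3\otimes\varphi^4$ acting on $M_{n_1}(\mathbb C)\otimes M_{n_2}(\mathbb C)\otimes M_{n_3}(\mathbb C)\otimes M_{n_4}(\mathbb C)$, where $\varphi^1=tr(.)1$, $\varphi^2=id$, $\varphi^3=t$ is the transposition, and $\varphi^4=(.)^\delta$. Each of these is a self-adjoint map of one of the four types listed in Theorem 3.2, taken at $B=C=1$ (up to the scalar $1/n_1$ in the first factor coming from the \emph{normalized} trace), hence each satisfies $tr_{(\pi,\gamma)}(\Lambda^i)=tr_{(\pi,\pi)}(\Lambda^i)$. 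Since a tensor product of self-adjoint maps is self-adjoint, and since by Theorem 3.2 the identity $tr_{(\pi,\gamma)}(\Lambda)=tr_{(\pi,\pi)}(\Lambda)$ is stable under tensor products and scalars, the composite $\varphi$ is self-adjoint and satisfies this identity as well. Theorem 3.1 then applies directly and gives $m\tilde{W}\sim\pi_{mn\rho}$, with $n=n_1n_2n_3n_4$ and $\rho=law(\Lambda)$.

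The main work is to compute $\rho=law(\Lambda)$. Since $\Lambda=\Lambda^1\otimes\Lambda^2\otimes\Lambda^3\otimes\Lambda^4$, the eigenvalues of $\Lambda$ are products of eigenvalues of the factors, so $\rho$ is the multiplicative convolution of the four individual laws, all of which are already recorded in the proof of Proposition 3.3: $law(\Lambda^1)=\delta_{1/n_1}$ (from $\Lambda^1=\frac{1}{n_1}I$), $law(\Lambda^2)=\frac{n_2^2-1}{n_2^2}\delta_0+\frac{1}{n_2^2}\delta_{n_2}$, $law(\Lambda^3)=\frac{n_3-1}{2n_3}\delta_{-1}+\frac{n_3+1}{2n_3}\delta_1$, and $law(\Lambda^4)=\frac{n_4-1}{n_4}\delta_0+\frac{1}{n_4}\delta_1$. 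The only way to obtain a nonzero product is to select the atoms $n_2$, $\pm1$, $1$ from the last three factors, producing exactly two nonzero atoms of $\rho$, located at $\pm n_2/n_1$, with respective weights $\frac{n_3\pm1}{2n_2^2n_3n_4}$, the remaining mass sitting at $0$.

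I then multiply by $mn=mn_1n_2n_3n_4$: the factor $n_4$ together with most of the other factors cancels, so that $mn\rho$ carries mass $\frac{mn_1(n_3+1)}{2n_2}=\frac{t(n_3+1)}{2}$ at $n_2/n_1$ and mass $\frac{t(n_3-1)}{2}$ at $-n_2/n_1$, where $t=mn_1/n_2$, plus an irrelevant atom at $0$. By Definition 1.4 the atom at $0$ contributes nothing (its coefficient $z=0$ kills the corresponding free Poisson variable), so $\pi_{mn\rho}=law\!\left(\frac{n_2}{n_1}\alpha-\frac{n_2}{n_1}\beta\right)=\frac{n_2}{n_1}\,law(\alpha-\beta)$, with $\alpha,\beta$ free Poisson of parameters $\frac{t(n_3\pm1)}{2}$, free. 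Hence $m\tilde{W}\sim\frac{n_2}{n_1}law(\alpha-\beta)$, and rescaling by $n_1/n_2$ gives $t\tilde{W}\sim law(\alpha-\beta)$, as claimed.

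I expect the only delicate points to be bookkeeping ones: getting the multiplicities (equivalently the weights) right in the multiplicative convolution, and correctly tracking the scalar $1/n_1$ coming from the normalized trace, since it is precisely this scalar that converts the naive prefactor $m/n_2$ into the stated $t=mn_1/n_2$. It is worth flagging that $n_4$ drops out of the final answer entirely: the diagonal factor only scales the weight of the surviving atom by $1/n_4$, which is exactly compensated by the $n_4$ appearing in $mn$, consistent with Proposition 3.3(4), where the diagonal map alone yields $\pi_m$ rather than $\pi_{mn_4}$.
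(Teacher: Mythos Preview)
Your proof is correct and follows essentially the same route as the paper's: both invoke Theorems 3.1 and 3.2 on the tensor product $\Lambda=\Lambda^1\otimes\Lambda^2\otimes\Lambda^3\otimes\Lambda^4$, then compute $\rho=\mathrm{law}(\Lambda)$ and simplify. The only cosmetic difference is that the paper first groups $P\otimes Q$ (a rank-$n_4$ projection in $(n_2n_4)^2$ dimensions) before tensoring with the flip $\Sigma$ and the identity, whereas you compute the four factor laws separately and take their multiplicative convolution; the resulting weights and atoms agree, and the final rescaling by $n_1/n_2$ is identical.
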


\begin{proof}
We have $\Lambda=(n_2/n_1)(1\otimes P\otimes\Sigma\otimes Q)$, where $P,\Sigma,Q$ are the matrices appearing in the above proof, namely: $P$ is a rank 1 projection in $n_2^2$ dimensions, $\Sigma$ is the flip in $n_3^2$ dimensions, and $Q$ is a rank $n_4$ projection in $n_4^2$ dimensions. Since $P\otimes Q$ is a rank $n_4$ projection in $(n_2n_4)^2$ dimensions, the matrix $P\otimes\Sigma\otimes Q$ follows the following law:
$$\eta=\frac{n_2^2n_4-1}{n_2^2n_4}\,\delta_0+\frac{n_3-1}{2n_2^2n_3n_4}\,\delta_{-1}+\frac{n_3+1}{2n_2^2n_3n_4}\,\delta_1$$

Now since tensoring with $id$ doesn't change the law, it follows that $\rho$ is the measure obtained from $\eta$ by replacing the $\pm 1$ atoms by $\pm n_2/n_1$ atoms, so we have:
$$mn\rho=\frac{mn_1n_3(n_2^2n_4-1)}{n_2}\,\delta_0+\frac{mn_1(n_3-1)}{2n_2}\,\delta_{-n_2/n_1}+\frac{mn_1(n_3+1)}{2n_2}\,\delta_{n_2/n_1}$$

By using Theorem 3.1 and Theorem 3.2 it follows that $m\tilde{W}\sim law((n_2/n_1)(\alpha-\beta))$, where $\alpha,\beta$ are as in the statement, and by multiplying by $n_1/n_2$ we obtain the result.
\end{proof}

\section{The main result}

In this section we investigate the general, non-unital case. Let us first work out a generalization of Theorem 3.1, which will prove to be well-adapted to this case.

\begin{theorem}
Assume that $\varphi$ is self-adjoint. If for any $p\in\mathbb N$ and $\pi\in NC(p)$ we have
$$tr_{(1,1)}(\Lambda)tr_{(\pi,\gamma)}(\Lambda)
=tr_{(1,\pi\gamma^{-1})}(\Lambda)tr_{(\pi,\pi)}(\Lambda)$$
then $\delta m\tilde{W}\sim\pi_{mn\rho}\boxtimes\nu$, with $\rho=law(\Lambda)$, $\nu=law(D)$, $\delta=tr(D)$, where $D=\varphi(1)$.
\end{theorem}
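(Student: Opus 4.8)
The plan is to match, for every $p$, the limiting $p$-th moment of $\delta m\tilde W$ with the $p$-th moment of $\pi_{mn\rho}\boxtimes\nu$, in the same spirit as the proof of Theorem 3.1, the new ingredient being the passage through free \emph{multiplicative} convolution. First I would write down both sides. By Theorem 2.4 the left-hand side is
$$\lim_{d\to\infty}(\mathbb E\circ tr)((\delta m\tilde W)^p)=\delta^p\sum_{\pi\in NC(p)}(mn)^{|\pi|}tr_{(\pi,\gamma)}(\Lambda).$$
For the right-hand side I would realise $\pi_{mn\rho}\boxtimes\nu$ as the law of $ab$, with $a\sim\pi_{mn\rho}$ and $b\sim\nu$ free, and apply the Nica--Speicher formula for the moments of a product of free variables,
$$\tau((ab)^p)=\sum_{\pi\in NC(p)}\kappa_\pi(a)\,\tau_{K(\pi)}(b),$$
where $\tau$ is the trace of the ambient noncommutative probability space and $K(\pi)$ is the Kreweras complement. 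By Theorem 1.6 the free cumulants of $a$ are $\kappa_k(a)=mn\cdot tr(\Lambda^k)=mn\cdot tr_{(\gamma,\gamma)}(\Lambda)$, so $\kappa_\pi(a)=(mn)^{|\pi|}tr_{(\pi,\pi)}(\Lambda)$, exactly as in Theorem 2.5; and since $b\sim law(D)$ we have $\tau_{K(\pi)}(b)=\prod_c tr(D^{\#c})$, the product being over the blocks $c$ of $K(\pi)$. Thus the $p$-th moment of $\pi_{mn\rho}\boxtimes\nu$ equals $\sum_{\pi\in NC(p)}(mn)^{|\pi|}tr_{(\pi,\pi)}(\Lambda)\prod_c tr(D^{\#c})$.

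Next I would record two identities that turn the hypothesis into an equality between these two sums. Writing $D=\varphi(1)$ in coordinates gives $D_{bd}=\sum_a\Lambda_{ab,ad}$, whence $tr(D)=\frac1n\sum_{a,b}\Lambda_{ab,ab}=n\,tr(\Lambda)$, i.e. $tr(\Lambda)=\delta/n$; since $(1,1)$ carries $2p$ cycles this yields $tr_{(1,1)}(\Lambda)=(tr\,\Lambda)^p=(\delta/n)^p$. Likewise, when the first permutation is the identity each $e$-index appears in a single factor, so summing it out leaves $\Lambda_{e a,e a'}\mapsto D_{aa'}$ and hence $Tr_{(1,\sigma)}(\Lambda)=Tr_\sigma(D)$; after normalisation $tr_{(1,\sigma)}(\Lambda)=n^{-p}tr_\sigma(D)$. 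Taking $\sigma=\pi\gamma^{-1}$ and using Lemma 2.1(1), which says $\pi\gamma^{-1}$ has the cycle type of the Kreweras complement $K(\pi)$, gives $tr_{(1,\pi\gamma^{-1})}(\Lambda)=n^{-p}\prod_c tr(D^{\#c})$.

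Substituting these two identities into the hypothesis and clearing the common factor $n^{-p}$, the assumption becomes, for each $\pi\in NC(p)$,
$$\delta^p\,tr_{(\pi,\gamma)}(\Lambda)=tr_{(\pi,\pi)}(\Lambda)\prod_c tr(D^{\#c}).$$
Multiplying by $(mn)^{|\pi|}$ and summing over $NC(p)$ then identifies the two moment expressions displayed above, for every $p$, which gives $\delta m\tilde W\sim\pi_{mn\rho}\boxtimes\nu$ and completes the argument.

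The step I expect to be most delicate is the bookkeeping in the product-of-free-variables formula: one must make sure the free cumulants of $\pi_{mn\rho}$ are attached to the blocks of $\pi$ while the moments of $D$ are attached to the blocks of the Kreweras complement, and that $\pi\gamma^{-1}$ genuinely carries the $K(\pi)$ cycle type so that the two prescriptions for the $D$-moments coincide; Lemma 2.1(1) is precisely what legitimises this matching. A secondary point to flag is that $D$ need not be positive, so $\pi_{mn\rho}\boxtimes\nu$ is to be read at the level of moments (equivalently, free cumulants), and what is actually established is an identity of moment sequences rather than a statement about honest positive measures.
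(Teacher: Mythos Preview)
Your proposal is correct and follows essentially the same route as the paper: compute the limiting moments of $\delta m\tilde W$ via Theorem~2.4, compute the moments of $\pi_{mn\rho}\boxtimes\nu$ via the Nica--Speicher product formula together with Theorem~1.6, identify $tr_{(1,\sigma)}(\Lambda)$ with the $\sigma$-moments of $D$, and then read the hypothesis as a term-by-term equality of the two $NC(p)$-sums. Your normalisation bookkeeping ($tr_{(1,1)}(\Lambda)=(\delta/n)^p$ and $tr_{(1,\sigma)}(\Lambda)=n^{-p}tr_\sigma(D)$, with the $n^{-p}$ cancelling) is in fact more careful than the paper's displayed identity $tr_\sigma(D)=tr_{(1,\sigma)}(\Lambda)$, which silently drops this factor; the paper's argument still goes through because the same slip appears on both sides, but your version makes the cancellation explicit.
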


\begin{proof}
The entries of $D=\varphi(1)$ are given by $D_{bd}=\sum_a\Lambda_{ab,ad}$, so we get:
$$tr_\sigma(D)=\sum_{e_r}\prod_sD_{e_re_{\sigma(r)}}=\sum_{ae_r}\Lambda_{ae_r,ae_{\sigma(r)}}=tr_{(1,\sigma)}(\Lambda)$$

In particular at $\sigma=1$ we obtain $\delta^p=tr_{(1,1)}(\Lambda)$, so Theorem 2.4 gives:
$$\lim_{d\to\infty}(\mathbb E\circ tr)((\delta m\tilde{W})^p)=\sum_{\pi\in NC(p)}(mn)^{|\pi|}tr_{(1,1)}(\Lambda)tr_{(\pi,\gamma)}(\Lambda)$$

We use the following formula from \cite{nsp} for of a multiplicative free convolution:
$$\mathbb E((AB)^p)=\sum_{\pi\in NC(p)}\prod_{b\in\pi}\mathbb E_{\pi\gamma^{-1}}(B)\kappa_\pi(A)$$

We know from Theorem 1.6 that the free cumulants of $\pi_{mn\rho}$ are the moments of $mn\rho$, given by $\kappa_p=mn\cdot tr(\Lambda^p)=mn\cdot tr_{(\gamma,\gamma)}(\Lambda)$. Thus the abstract free cumulants of $\pi_{mn\rho}$ are the numbers $\kappa_\pi=(mn)^{|\pi|}tr_{(\pi,\pi)}(\Lambda)$. Together with the formula $tr_\sigma(D)=tr_{(1,\sigma)}(\Lambda)$ above, the above general formula from \cite{nsp} shows that the moments of $\pi_{mn\rho}\boxtimes\nu$ are:
$$M_p=\sum_{\pi\in NC(p)}(mn)^{|\pi|}tr_{(1,\pi\gamma^{-1})}(\Lambda)tr_{(\pi,\pi)}(\Lambda)$$

We conclude that the formula $\delta m\tilde{W}\sim\pi_{mn\rho}\boxtimes\nu$ is equivalent to:
$$\sum_{\pi\in NC(p)}(mn)^{|\pi|}tr_{(1,1)}(\Lambda)tr_{(\pi,\gamma)}(\Lambda)
=\sum_{\pi\in NC(p)}(mn)^{|\pi|}tr_{(1,\pi\gamma^{-1})}(\Lambda)tr_{(\pi,\pi)}(\Lambda)$$

With this equivalence in hand, the assertion in the statement is now clear.
\end{proof}

The above result suggests the following technical definition:

\begin{definition}
A map $\varphi$ and the corresponding matrix $\Lambda$ are called ``multiplicative'' if
$$tr_{(1,1)}(\Lambda)tr_{(\pi,\gamma)}(\Lambda)
=tr_{(1,\pi\gamma^{-1})}(\Lambda)tr_{(\pi,\pi)}(\Lambda)$$
for any $p\in\mathbb N$ and $\pi\in NC(p)$.
\end{definition}

Observe that in this definition one can replace the normalized traces $tr$ by the unnormalized traces $Tr$: this follows indeed from $Tr_{(\pi,\sigma)}(\Lambda)=n^{|\pi|+|\sigma|}tr_{(\pi,\sigma)}(\Lambda)$.

As already mentioned, this definition is a technical one, coming straight from Theorem 4.1, which in turn comes from abstract algebraic manipulations. There is no simpler formulation of it, but in what follows we will try to have some understanding of it. 

Let us extend now Theorem 3.2 above. We have the following result here:

\begin{theorem}
The following types of maps and matrices are multiplicative:
\begin{enumerate}
\item $\varphi(A)=Tr(BA)C$, or $\Lambda=B\otimes C$, in the case $C=c1$.

\item $\varphi(A)=BAC$, or $\Lambda=|B\rangle\langle C|$, for any $B,C$.

\item $\varphi(A)=BA^tC$, or $\Lambda=\mathrm{SWAP}_{BC}$, in the case $BC=c1$.

\item $\varphi(A)=xA^\delta$, or $\Lambda=\mathrm{Center}_x$, in the case $x=c1$.
\end{enumerate}

\noindent In addition, the set of multiplicative maps and matrices is stable by tensor products.
\end{theorem}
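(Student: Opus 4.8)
The plan is to reduce the multiplicativity identity of Definition 4.2 to the generalized-trace computations already performed in the proof of Theorem 3.2, and then to settle each of the four families by feeding in the cycle-counting identities of Lemma 2.1. First I would pass to the unnormalized traces: by the remark following Definition 4.2, namely $Tr_{(\pi,\sigma)}(\Lambda)=n^{|\pi|+|\sigma|}tr_{(\pi,\sigma)}(\Lambda)$, multiplicativity is equivalent to
$$Tr_{(1,1)}(\Lambda)Tr_{(\pi,\gamma)}(\Lambda)=Tr_{(1,\pi\gamma^{-1})}(\Lambda)Tr_{(\pi,\pi)}(\Lambda),$$
the overall $n$-exponents on the two sides matching because $|\pi|+|\pi\gamma^{-1}|=p+1$ by Lemma 2.1(1). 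Thus it suffices to evaluate the four generalized traces $Tr_{(1,1)}$, $Tr_{(\pi,\gamma)}$, $Tr_{(1,\pi\gamma^{-1})}$, $Tr_{(\pi,\pi)}$ for each family and compare.

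For families (1)--(3) I would reuse verbatim the formulas $Tr_{(\pi,\sigma)}(\Lambda^1)=Tr_\pi(B)Tr_\sigma(C)$, $Tr_{(\pi,\sigma)}(\Lambda^2)=Tr_{\pi\sigma^{-1}}(BC)$ and $Tr_{(\pi,\sigma)}(\Lambda^3)=Tr_{\sigma\pi}(BC)$ from Theorem 3.2, together with the elementary fact that $Tr_\tau(M)$ depends only on the cycle type of $\tau$, so that $Tr_\tau(M)=Tr_{\tau^{-1}}(M)$. Family (2) is then the cleanest and is the decisive new input: both sides reduce to $Tr(BC)^p\,Tr_{\pi\gamma^{-1}}(BC)$ once one notes that the pair $(1,\pi\gamma^{-1})$ contributes $Tr_{\gamma\pi^{-1}}(BC)=Tr_{\pi\gamma^{-1}}(BC)$, and no constraint on $B,C$ is needed --- this is precisely the non-unital phenomenon that Theorem 3.2 could not capture. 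In family (1), setting $C=c1$ makes each second-leg factor $c^pn^{|\sigma|}$ while the first-leg factors $Tr_1(B)Tr_\pi(B)$ are common to both sides, so the identity collapses to $|1|+|\gamma|=|\pi\gamma^{-1}|+|\pi|$, i.e. $p+1=p+1$. In family (3), setting $BC=c1$ makes every factor $c^pn^{|\cdot|}$, and the identity collapses to the exponent equality $|1|+|\gamma\pi|=|\pi\gamma^{-1}|+|\pi^2|$; here I would invoke Lemma 2.1(3) in the form $|\gamma\pi|=||\pi||+1$ and Lemma 2.1(2) at $l=1$ in the form $|\pi^2|=|\pi|+||\pi||$, so that both exponents equal $p+1+||\pi||$.

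For family (4), $\mathrm{Center}_x$ with $x=c1$ is the matrix $\Lambda=c\sum_a e_{aa}\otimes e_{aa}$, and I would compute $Tr_{(\pi,\sigma)}(\Lambda)$ directly: all index variables are forced equal along the cycles of both $\pi$ and $\sigma$, so $Tr_{(\pi,\sigma)}(\Lambda)=c^pn^{|\pi\wedge\sigma|}$, where $|\pi\wedge\sigma|$ counts the connected components of the cycles of $\pi$ and $\sigma$. The four pairs give $|1\wedge1|=p$, $|\pi\wedge\gamma|=1$ (since $\gamma$ connects everything), $|1\wedge\pi\gamma^{-1}|=|\pi\gamma^{-1}|=p+1-|\pi|$ and $|\pi\wedge\pi|=|\pi|$, so both sides equal $c^{2p}n^{p+1}$. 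Finally, tensor stability is formal: Theorem 3.2 supplies the factorization $Tr_{(\pi,\sigma)}(\Lambda^1\otimes\Lambda^2)=Tr_{(\pi,\sigma)}(\Lambda^1)Tr_{(\pi,\sigma)}(\Lambda^2)$, so each of the four generalized traces of $\Lambda^1\otimes\Lambda^2$ splits as a product over the two factors, and the multiplicativity of $\Lambda^1$ and $\Lambda^2$ separately forces that of $\Lambda^1\otimes\Lambda^2$. I expect the only real obstacle to be the exponent bookkeeping in families (3) and (4): one must plug in exactly the right identities from Lemma 2.1 --- $|\pi^2|=|\pi|+||\pi||$ and $|\pi\gamma|=||\pi||+1$ for (3), and the connected-component count for (4) --- and verify that the powers of $n$ genuinely agree; families (1) and (2) are then routine, with (2) being the conceptually important case.
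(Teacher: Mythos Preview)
Your proposal is correct and follows essentially the same approach as the paper: both arguments reduce the multiplicativity identity to the generalized-trace formulas $Tr_{(\pi,\sigma)}(\Lambda)$ already computed in the proof of Theorem 3.2, and then verify the identity case by case, with tensor stability coming from the factorization $Tr_{(\pi,\sigma)}(\Lambda^1\otimes\Lambda^2)=Tr_{(\pi,\sigma)}(\Lambda^1)Tr_{(\pi,\sigma)}(\Lambda^2)$.

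The only organizational difference is that the paper is much terser for cases (1), (3), (4): it simply says they ``follow from Theorem 3.2''. The reason this works is that those three families are exactly the unital-modulo-scalars maps treated there, so Theorem 3.2 already supplies $tr_{(\pi,\gamma)}(\Lambda)=tr_{(\pi,\pi)}(\Lambda)$, while unitality $D=\varphi(1)=c1$ forces $tr_{(1,\sigma)}(\Lambda)=tr_\sigma(D)=c^p$ independently of $\sigma$, giving $tr_{(1,1)}(\Lambda)=tr_{(1,\pi\gamma^{-1})}(\Lambda)$; multiplicativity is then immediate without redoing the exponent bookkeeping. Only case (2) --- the genuinely non-unital one --- receives a direct computation in the paper, and there its calculation (in normalized traces) matches yours (in unnormalized traces). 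Your more explicit route through Lemma 2.1 for (3) and the $|\pi\wedge\sigma|$ count for (4) is correct, just slightly more laborious than invoking the unital shortcut.
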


\begin{proof}
(1,3,4) follow from Theorem 3.2, and the last assertion follows from the proof of Theorem 3.2. For proving (2) we use a formula found in the proof of Theorem 3.2:
$$Tr_{(\pi,\sigma)}(\Lambda)=Tr_{\pi\sigma^{-1}}(BC)$$

In terms of normalized traces, and with $D=BC$, we get:
$$tr_{(\pi,\sigma)}(\Lambda)=n^{|\pi\sigma^{-1}|-|\pi|-|\sigma|}tr_{\pi\sigma^{-1}}(D)$$

In particular we have the following formulae:
\begin{eqnarray*}
tr_{1,1}(\Lambda)&=&n^{p-p-p}tr_1(D)=n^{-p}tr_1(D)\\
tr_{\pi,\gamma}(\Lambda)&=&n^{(p+1-|\pi|)-|\pi|-1}tr_{\pi\gamma^{-1}}(D)=n^{p-2|\pi|}tr_{\pi\gamma^{-1}}(D)\\
tr_{1,\pi\gamma^{-1}}(\Lambda)&=&n^{(p+1-|\pi|)-p-(p+1-|\pi|)}tr_{\pi\gamma^{-1}}(D)=n^{-p}tr_{\pi\gamma^{-1}}(D)\\
tr_{\pi,\pi}(\Lambda)&=&n^{p-|\pi|-|\pi|}tr_1(D)=n^{p-2|\pi|}tr_1(D)
\end{eqnarray*}

By multiplying we obtain the formula in the statement.
\end{proof}

Using the graphical notation from \cite{cn1}, the diagrams for the maps of type (1-4) in Theorem 4.3 are some very simple ones, namely those in Figure \ref{fig:simple-Lambda}.

\begin{figure}
\centering
\subfigure[]{\includegraphics{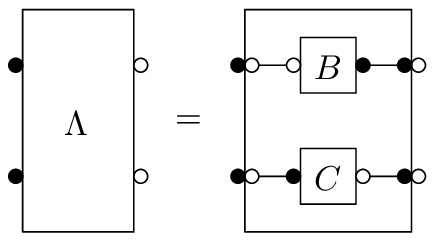}}\quad 
\subfigure[]{\includegraphics{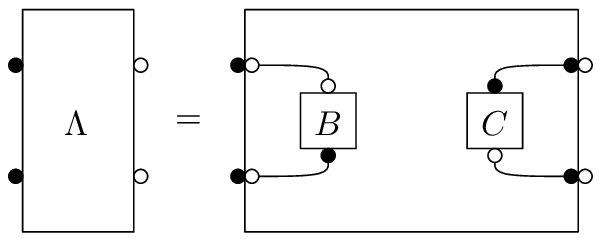}}\\
\subfigure[]{\includegraphics{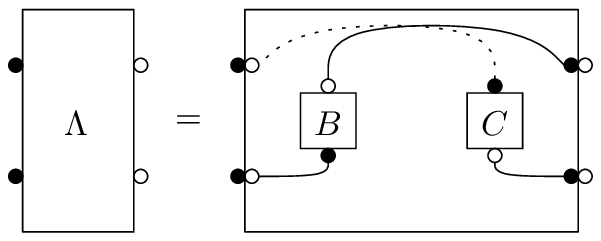}}\quad
\subfigure[]{\includegraphics{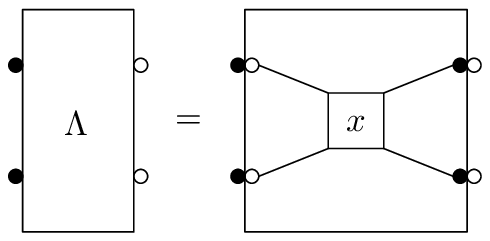}}
\caption{Special types of matrices $\Lambda$ whose mixed moments factorize properly.}
\label{fig:simple-Lambda}
\end{figure}

So, let us try to combine the diagrams in Figure 2 in a ``planar'' way. We call ``generalized ${\rm Center}_c$ diagram'' the diagram $\Lambda$ having $k$ left legs and $l$ right legs, given by:
$$\Lambda_{i_1\ldots i_k,j_1\ldots j_l}=c\cdot\delta_{i_1,\ldots,i_k,j_1,\ldots,j_l}$$

The following planar-categorical definition is inspired from \cite{jon}, \cite{cn1}:

\begin{definition}
The ``strings and beads'' operad $\mathcal P$ is defined as follows:
\begin{enumerate}
\item The elements are the matrices $\Lambda\in (M_{n_1}(\mathbb C)\otimes\ldots\otimes M_{n_k}(\mathbb C))^{\otimes 2}$, represented by vertical boxes with $2k$ left legs and $2k$ right legs.

\item The legs are colored by the values of the corresponding $n_i$ numbers, and the tensor product and composition operations have to match colors.

\item Inside the box we have ``strings'' joining the $2k+2k$ legs, i.e. we have a partition of $2k+2k$ elements into even blocks $\pi\in P_{even}(2k,2k)$, with colors matching.

\item Each string can be decorated with ``beads'', i.e. with usual matrices $A\in M_n(\mathbb C)$, where $n$ is the color of the string.

\item The ``multi-leg'' strings, representing blocks of $\pi\in P_{even}(2k,2k)$ having size $\geq 4$, have at the multi-crossing a ``small bead'', of generalized ${\rm Center}_c$ type. 
\end{enumerate}
\end{definition}

In other words, the matrices $\Lambda\in (M_{n_1}(\mathbb C)\otimes\ldots\otimes M_{n_k}(\mathbb C))^{\otimes 2}$ will be represented as diagrams having the left and right sequences of $2k$ points marked $n_1,\ldots,n_k,n_k,\ldots,n_1$, from top to bottom. In order to view such a matrix as $\Lambda\in M_{n_1\ldots n_k}(\mathbb C)\otimes M_{n_k\ldots n_1}(\mathbb C)$, we will simply ``compact the blocks'', i.e. we will view $\Lambda$ as a diagram between 2 left points and 2 right points, with both 2-series labeled $n_1\ldots n_k,n_k\ldots n_1$, from top to bottom.

We know from Theorem 4.3 that the 4 types of diagrams appearing there, as well as their tensor products, are multiplicative in the sense of Definition 4.2. The challenging question is to determine all the elements of $\mathcal P$ which are multiplicative.

Let us call ``through'' strings of $\Lambda\in\mathcal P$ the strings of the corresponding partition, joining left and right points. With this convention, we have the following result:

\begin{proposition}
Assume that $\Lambda\in\mathcal P$ is self-adjoint, and that its through strings are not decorated with beads. Then $\Lambda$ is multiplicative.
\end{proposition}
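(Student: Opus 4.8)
The plan is to exploit the fact that, for an element of the operad $\mathcal P$, the defining partition consists only of \emph{monochromatic} blocks: two legs can be joined by a string only if they carry the same color. Since each factor $M_{n_i}(\mathbb C)$ contributes exactly two left legs and two right legs to the box, the four legs of a given color are, generically, partitioned among themselves into even blocks, and there are only three possibilities --- a single $4$-block, a ``cup $+$ cap'' pair (two left legs together, two right legs together), or a pair of left-to-right ``through'' blocks (either straight or crossed). These are precisely the patterns of the four generators $\varphi(A)=Tr(BA)C$, $\varphi(A)=BAC$, $\varphi(A)=BA^tC$, $\varphi(A)=xA^\delta$ of Theorem 4.3. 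Hence, when all the $n_i$ are distinct, $\Lambda$ factors as a tensor product of generator-type pieces, one for each color.

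First I would identify each color-piece with the corresponding generator and check that the hypotheses make it \emph{unconditionally} multiplicative. A cup-plus-cap piece is of type $|B\rangle\langle C|$, which Theorem 4.3(2) declares multiplicative for \emph{any} beads $B,C$; here self-adjointness $\Lambda=\Lambda^*$ is used to force $C=B^*$, so that the cap bead is the adjoint of the cup bead, the piece is a genuine type-$(2)$ generator, and the reflection pattern is consistent. A through-carrying piece is of type $(1)$ or $(3)$, and these are multiplicative only under the side conditions $C=c1$, respectively $BC=c1$; this is exactly where the assumption that the through strings carry \emph{no} beads enters, forcing $B=C=1$ and making the side conditions automatic. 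A $4$-block is a generalized ${\rm Center}_c$ bead, of type $(4)$ with scalar $x=c1$. Invoking the stability of multiplicativity under tensor products, which is the last assertion of Theorem 4.3, then concludes this case.

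The main obstacle is the degenerate case in which several dimensions coincide, $n_i=n_j$: then the color of a leg no longer determines its factor, a block may link legs belonging to different factors, and multi-leg ${\rm Center}$ blocks in the sense of Definition 4.4 may genuinely appear, so the clean per-factor tensor splitting breaks down. I would handle this by grouping together all factors tied by a common block and evaluating $tr_{(\pi,\sigma)}(\Lambda)$ directly as a closed diagram: gluing $p$ copies of $\Lambda$ with the first tensor leg wired by $\pi$ and the second by $\sigma$ produces a union of loops, whose value is a power of $N=n_1\cdots n_k$ (one factor per loop) times the cyclic bead-traces coming from the cups and caps. Because the through strings are undecorated, the bead part is governed solely by the cup/cap pattern and, by self-adjointness, assembles into the same cyclic words on the two sides of the multiplicativity identity; the remaining task is to match the powers of $N$. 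I expect this last step to reduce, component by component over the grouped blocks, to the combinatorial identities of Lemma 2.1 --- in particular $|\pi\gamma^{-1}|=p+1-|\pi|$, $|\pi\gamma|=||\pi||+1$ and the resulting $|\pi^2|=|\pi|+||\pi||$ --- applied exactly as in the exponent bookkeeping already carried out in the proof of Theorem 3.2. Verifying that these loop counts balance for an arbitrary planar multi-leg configuration, rather than only for the four elementary generators, is the genuinely delicate point of the argument.
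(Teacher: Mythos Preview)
Your reduction in the first two paragraphs is fine when all the $n_i$ are distinct: then every block of the underlying partition lives on the four legs of a single color, $\Lambda$ is literally a tensor product of the four generator types, and Theorem~4.3 together with tensor stability finishes. But this is only the trivial case of the proposition. The content lies precisely in your third paragraph, where colors coincide and genuine multi-leg ${\rm Center}_c$ blocks in the sense of Definition~4.4(5) can occur --- and there you do not give a proof, only a program (``I expect this last step to reduce to the combinatorial identities of Lemma~2.1\ldots Verifying that these loop counts balance\ldots is the genuinely delicate point''). That is the gap: you have correctly located the hard step and then stopped short of it.

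The paper avoids this obstacle by decomposing $\Lambda$ not by colors but by \emph{symmetric blocks} of the underlying partition $x\in P_{even}(2k,2k)$: self-adjointness forces $x$ to be invariant under the vertical reflection, so one writes $x=\sqcup_i b_i$ with each $b_i$ a block or a reflection-pair of blocks, and completes each $b_i$ to an element $\Lambda_i\in\mathcal P$ by adding undecorated horizontal strings. The key fact, immediate from the diagrammatic calculus, is the factorization $Tr_{(\pi,\sigma)}(\Lambda)=\prod_i Tr_{(\pi,\sigma)}(\Lambda_i)$, which holds regardless of any color coincidences. This reduces the problem to a single symmetric block. Pairings are then covered by Theorem~4.3 exactly as you say; for a generalized ${\rm Center}_c$ block (or a symmetric pair of them) the paper computes $Tr_{(\pi,\sigma)}$ directly and finds it equal to a constant times $n^{|\pi\wedge\sigma|}n^{r|\sigma|}$ for a suitable $r$, and both factors $n^{|\pi\wedge\sigma|}$ and $n^{|\sigma|}$ satisfy the multiplicativity identity by Lemma~2.1(1). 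That explicit computation is what is missing from your sketch; the block-wise (rather than color-wise) decomposition is what makes it tractable.
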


\begin{proof}
The element $\Lambda$ consists by definition of a partition $x\in P_{even}(2k,2k)$, decorated with beads. Since $\Lambda$ is self-adjoint, $x$ is ``symmetric'', in the sense that it is invariant under the reflection with respect to the vertical middle axis. We decompose $x=\sqcup_{i\in I}b_i$, where $b_i$ are blocks or pairs of blocks of $x$, chosen symmetric, and with $|I|$ maximal.

For any such ``symmetric block'' $b_i$ we denote by $\Lambda_i\in\mathcal P$ the element obtained from $\Lambda$ by keeping $b_i$ with its beads, and by completing with (undecorated) horizontal strings.

It follows from definitions that we have:
$$Tr_{(\pi,\sigma)}(\Lambda)=\prod_{i\in I}Tr_{(\pi,\sigma)}(\Lambda_i)$$

This formula shows that the multiplicativity condition splits over the symmetric blocks. Now for the symmetric blocks consisting of pairings, the result follows from Theorem 4.3 above. So, it remains to prove that the symmetrization $\tilde{\Lambda}$ of any generalized ${\rm Center}_c$ type partition $\Lambda$ is multiplicative. By adding an extra string if needed we can assume  that $\Lambda$ has an even number of legs at left and at right, and we have two cases here:

(1) $\tilde{\Lambda}=\Lambda$. Here the formula of $\Lambda$ is as follows, with $m\in\{2k-1,2k\}$:
$$\Lambda_{i_1\ldots i_{2k},j_1\ldots j_{2k}}=c\cdot\delta_{i_1,\ldots,i_m,j_1,\ldots,j_m}\delta_{i_{m+1}\ldots i_{2k},j_{m+1}\ldots j_{2k}}$$

With these notations, the mixed trace is given by:
\begin{eqnarray*}
Tr_{(\pi,\sigma)}(\Lambda)
&=&\sum_{i_1^r\ldots i_k^r}\prod_l\Lambda_{i_1^l\ldots i_{2k}^l,i_1^{\pi(l)}\ldots i_k^{\pi(l)}i_{k+1}^{\sigma(l)}\ldots i_{2k}^{\sigma(l)}}\\
&=&\sum_{i_1^r\ldots i_k^r}\prod_lc\cdot
\delta_{i_1^l,\ldots,i_m^l,i_1^{\pi(l)},\ldots,i_k^{\pi(l)},i_{k+1}^{\sigma(l)},\ldots i_m^{\sigma(l)}}
\delta_{i_{m+1}^l\ldots i_{2k}^l,i_{m+1}^{\sigma(l)}\ldots i_{2k}^{\sigma(l)}}\\
&=&c^p\cdot n^{|\pi\wedge\sigma|}\cdot n^{(2k-m)|\sigma|}
\end{eqnarray*}

Now since both $n^{|\pi\wedge\sigma|}$ and $n^{|\sigma|}$ are multiplicative, this gives the result.

(2) $\tilde{\Lambda}\neq\Lambda$. Here the formula of $\Lambda$ is as follows, with $m\in\{2k-1,2k\}$:
\begin{eqnarray*}
\tilde{\Lambda}_{i_1\ldots i_{2k},j_1\ldots j_{2k}}
&=&c^2\cdot\delta_{i_1,\ldots,i_s,i_{k+1},\ldots,i_{k+t},j_{s+1},\ldots,j_k,j_{k+t+1}\ldots,j_m}\\
&&\ \ \ \ \ \delta_{i_{s+1},\ldots,i_k,i_{k+t+1},\ldots,i_m,j_1,\ldots,j_s,j_{k+1}\ldots,j_{k+t}}\\
&&\ \ \ \ \ \delta_{i_{m+1}\ldots i_{2k},j_{m+1}\ldots j_{2k}}
\end{eqnarray*}

With these notations, the mixed trace is given by:
\begin{eqnarray*}
Tr_{(\pi,\sigma)}(\Lambda)
&=&\sum_{i_1^r\ldots i_k^r}\prod_lc\cdot
\delta_{i_1^l,\ldots,i_s^l,i_{k+1}^l,\ldots,i_{k+t}^l,i_{s+1}^{\pi(l)},\ldots,i_k^{\pi(l)},i_{k+t+1}^{\sigma(l)},\ldots i_m^{\sigma(l)}}\\
&&\ \ \ \ \ \ \ \ \ \ \ \ \ \ \delta_{i_{s+1}^l,\ldots,i_k^l,i_{k+t+1}^l,\ldots,i_m^l,i_1^{\pi(l)},\ldots,i_s^{\pi(l)},i_{k+1}^{\sigma(l)},\ldots i_{k+t}^{\sigma(l)}}\\
&&\ \ \ \ \ \ \ \ \ \ \ \ \ \ \delta_{i_{m+1}^l\ldots i_{2k}^l,i_{m+1}^{\sigma(l)}\ldots i_{2k}^{\sigma(l)}}
\end{eqnarray*}

By looking at the indices we obtain $Tr_{(\pi,\sigma)}(\Lambda)=c^2\cdot n^{2|\pi\wedge\sigma|}\cdot n^{(2k-m)|\sigma|}$, and since both the quantities $n^{|\pi\wedge\sigma|}$ and $n^{|\sigma|}$ are multiplicative, we are done.
\end{proof}

We can state and prove now our main result:

\begin{theorem}
Let $W$ be a complex Wishart matrix of parameters $(dn,dm)$, and let $\varphi:M_n(\mathbb C)\to M_n(\mathbb C)$ be a linear map coming from ``strings and beads'' diagram $\Lambda\in\mathcal P$, which is symmetric and has undecorated through strings. Then the $d\to\infty$ asymptotic eigenvalue distribution of the block-modified Wishart matrix $\tilde{W}=(id\otimes\varphi)W$ is given by $\delta m\tilde{W}\sim\pi_{mn\rho}\boxtimes\nu$, with $\rho=law(\Lambda)$, $\nu=law(D)$, $\delta=tr(D)$, where $D=\varphi(1)$.
\end{theorem}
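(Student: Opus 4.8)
The plan is to obtain this statement as an immediate consequence of the two preceding results, namely the abstract distribution criterion in Theorem 4.1 and the combinatorial verification in Proposition 4.5, which between them already contain all the substance. The strategy is simply to check that the hypotheses of the theorem feed precisely into those two results, so that no fresh computation is needed.

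First I would observe that the matrix $\Lambda$ being symmetric, i.e. invariant under reflection in the vertical middle axis of its diagram, is exactly the condition $\Lambda=\Lambda^*$, which in turn is equivalent, by the remark opening Section 3, to the map $\varphi$ being self-adjoint. This secures the standing hypothesis of Theorem 4.1. Next, since $\Lambda\in\mathcal P$ is symmetric and its through strings carry no beads, Proposition 4.5 applies verbatim and yields that $\Lambda$ is multiplicative in the sense of Definition 4.2, i.e. the identity
$$tr_{(1,1)}(\Lambda)tr_{(\pi,\gamma)}(\Lambda)=tr_{(1,\pi\gamma^{-1})}(\Lambda)tr_{(\pi,\pi)}(\Lambda)$$
holds for every $p\in\mathbb N$ and every $\pi\in NC(p)$.

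With both the self-adjointness and the multiplicativity identity in hand, I would simply invoke Theorem 4.1, whose two hypotheses are now exactly met, to conclude that $\delta m\tilde{W}\sim\pi_{mn\rho}\boxtimes\nu$ with the stated parameters $\rho=law(\Lambda)$, $\nu=law(D)$, $\delta=tr(D)$, $D=\varphi(1)$. There is in fact no genuine obstacle at this final stage: the analytic content, namely the asymptotic moment formula of Theorem 2.4, the free cumulant description of $\pi_{mn\rho}$ coming from Theorem 1.6, and the Nica--Speicher formula for the free multiplicative convolution, has already been absorbed into Theorem 4.1, while the only delicate combinatorics, namely the block-by-block factorization $Tr_{(\pi,\sigma)}(\Lambda)=\prod_{i\in I}Tr_{(\pi,\sigma)}(\Lambda_i)$ over the symmetric blocks together with the handling of the generalized ${\rm Center}_c$ beads, has been dispatched in Proposition 4.5. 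The one point I would double-check is that the quantifier in Definition 4.2, ranging over all $p$ and all $\pi\in NC(p)$, coincides with the one required by Theorem 4.1, which it does, so the two results compose without any gap.
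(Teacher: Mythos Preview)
Your proposal is correct and follows exactly the same approach as the paper: invoke Proposition 4.5 to obtain multiplicativity of $\Lambda$, then feed this into Theorem 4.1. The paper's own proof is in fact just these two sentences, so your version is simply a more explicit rendering of the same argument, with the additional (and legitimate) remark that symmetry of the diagram corresponds to self-adjointness of $\Lambda$ and hence of $\varphi$.
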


\begin{proof}
We know from Proposition 4.5 that $\Lambda$ is multiplicative in the sense of Definition 4.2. Thus Theorem 4.1 applies, and gives the result.
\end{proof}

Observe that Theorem 4.6 doesn't fully cover Theorem 4.3. The problem is that, when trying to fully combine the diagrams in Theorem 4.3, our formula $\delta m\tilde{W}\sim\pi_{mn\rho}\boxtimes\nu$ seems to need a kind of substantial upgrade, with more quantities involved on the right. This kind of upgrade is also suggested by the various questions raised in the introduction, but finding it is of course a quite difficult problem, that we would like to raise here.


\begin{thebibliography}{99}

\bibitem{agt}N. Alexeev, F. G\"otze and A. Tikhomirov, On the asymptotic distribution of singular values of power of random matrices, {\em Lithuanian Math. Journal} {\bf 50} (2010), 121--132.

\bibitem{aub}G. Aubrun, Partial transposition of random states and non-centered semicircular distributions, {\em Random Matrices: Theory Appl.}, {\bf 1} (2012), 125--145.

\bibitem{bb+}T. Banica, S.T. Belinschi, M. Capitaine and B. Collins, Free Bessel laws, {\em Canad. J. Math.} {\bf 63} (2011), 3--37.

\bibitem{bbc}T. Banica, J. Bichon and B. Collins, The hyperoctahedral quantum group, {\em J. Ramanujan Math. Soc.} {\bf 22} (2007), 345--384.

\bibitem{bne}T. Banica and I. Nechita, Asymptotic eigenvalue distributions of block-transposed Wishart matrices, {\em J. Theoret. Probab.}, to appear.

\bibitem{bsk}T. Banica and A. Skalski, Quantum isometry groups of duals of free powers of cyclic groups, {\em Int. Math. Res. Not.} {\bf 9} (2012), 2094--2122. 

\bibitem{bzy}I. Bengtsson and K. \.Zyczkowski, Geometry of quantum states, Cambridge Univ. Press (2006).

\bibitem{bia}P. Biane, Some properties of crossings and partitions, {\em Discrete Math.} {\bf 175} (1997), 41--53.

\bibitem{bry}W. Bryc, Compound real Wishart and $q$-Wishart matrices, {\em Int. Math. Res. Not.} {\bf 79} (2008), 1--42.

\bibitem{cn1}B. Collins and I. Nechita, Random quantum channels I: graphical calculus and the Bell state phenomenon, {\em Comm. Math. Phys.} {\bf 297} (2010), 345--370.

\bibitem{cn2}B. Collins and I. Nechita, Gaussianization and eigenvalue statistics for random quantum channels (III), {\em Ann. Appl. Probab.} {\bf 21} (2011), 1136--1179.

\bibitem{glm}P. Graczyk, G. Letac and H. Massam, The complex Wishart distribution and the symmetric group, {\em Ann. Statist.} {\bf 31} (2003), 287--309.

\bibitem{hpe}F. Hiai and D. Petz, The semicircle law, free random variables and entropy, AMS (2000).

\bibitem{jon}V.F.R. Jones, Planar algebras I, {\tt arxiv:math/9909027}.

\bibitem{len}R. Lenczewski, Limit distributions of random matrices, {\tt arxiv:1208.3586}.

\bibitem{mpa}V.A. Marchenko and L.A. Pastur, Distribution of eigenvalues in certain sets of random matrices, {\em Mat. Sb.} {\bf 72} (1967), 507--536.

\bibitem{nsp}A. Nica and R. Speicher, Lectures on the combinatorics of free probability, Cambridge Univ. Press (2006).

\bibitem{oso}S. O'Rourke and A. Soshnikov, Products of independent non-Hermitian random matrices, {\em Electron. J. Probab.} {\bf 16} (2011), 2219--2245. 

\bibitem{sp1}R. Speicher, Multiplicative functions on the lattice of noncrossing partitions and free convolution, {\em Math. Ann.} {\bf 298} (1994), 611--628.

\bibitem{sp2}R. Speicher, Combinatorial theory of the free product with amalgamation and operator-valued free probability theory, {\em Mem. Amer. Math. Soc.} {\bf 132}  (1998).

\bibitem{vo1}D.V. Voiculescu, Addition of certain noncommuting random variables, {\em J. Funct. Anal.} {\bf 66} (1986), 323--346.

\bibitem{vo2}D.V. Voiculescu, Limit laws for random matrices and free products, {\em Invent. Math.} {\bf 104} (1991), 201--220.

\bibitem{vo3}D.V. Voiculescu, A strengthened asymptotic freeness result for random matrices with applications to free entropy, {\em Int. Math. Res. Not.} {\bf 1} (1998), 41--63.

\bibitem{vdn}D.V. Voiculescu, K.J. Dykema and A. Nica, Free random variables, AMS (1992).

\end{thebibliography}
\end{document}